\renewcommand\eqref[1]{(\ref{#1})} 
\newtheorem{theorem}{Theorem}[section]
\newtheorem{corollary}[theorem]{Corollary}
\newtheorem{proposition}[theorem]{Proposition}
\theoremstyle{definition}
\newtheorem{remark}[theorem]{Remark}
\newtheorem{example}[theorem]{Example}
\newcommand{\mb}[1]{\ensuremath{\mathbb{#1}}}
\newcommand{\R}{\mb{R}}
\newcommand{\beq}{\begin{equation}}
\newcommand{\eeq}{\end{equation}}
\newcommand{\lara}[1]{\langle #1 \rangle}
\title[Higher order hyperbolic pseudo-differential equations]
{$C^\infty$ well-posedness of higher order hyperbolic pseudo-differential equations with multiplicities}
\author[Claudia Garetto]{Claudia Garetto}
\address{
  Claudia Garetto:
  \endgraf
School of Mathematical Sciences
  \endgraf
  Queen Mary University of London
  \endgraf
 Mile End Road, E1 4UJ, London
  \endgraf
  United Kingdom
  \endgraf
  {\it E-mail address} {\rm c.garetto@qmul.ac.uk}
  }
\author[Bolys Sabitbek]{Bolys Sabitbek}
\address{
  Bolys Sabitbek:
  \endgraf
School of Mathematical Sciences
  \endgraf
 Queen Mary University of London
  \endgraf
 Mile End Road, London, E1 4NS
  \endgraf
  United Kingdom
  \endgraf
  {\it E-mail address} {\rm b.sabitbek@qmul.ac.uk}
  }
\thanks{The authors were supported by the
EPSRC grant EP/V005529/2. The authors declare to have no conflicts of interest.
}
\date{}
\subjclass[2020]{Primary 35L25; 35L30; Secondary 46E35;}
\keywords{Hyperbolic equations, multiplicities, lower order terms}
\begin{document}

\maketitle

\begin{abstract}
In this paper, we study higher order hyperbolic pseudo-differential equations with variable multiplicities. We work in arbitrary space dimension and we assume that the principal part is time-dependent only. We identify sufficient conditions on the roots and the lower order terms (Levi conditions) under which the corresponding Cauchy problem is $C^\infty$ well-posed. This is achieved via transformation into a first order system, reduction into upper-triangular form and application of suitable Fourier integral operator methods previously developed for hyperbolic non-diagonalisable systems. We also discuss how our result compares with the literature on second and third order hyperbolic equations.
\end{abstract}

\section{Introduction}
In this paper, we study the Cauchy problem for hyperbolic equations of order $m$:
\begin{align}\label{eq-m_order_HE}  
   \begin{cases}
      D_t^mu-\sum_{j=0}^{m-1}A_{m-j}(t,x,D_x)D_t^ju=f(t,x), & t\in[0,T],\, x\in\R^{n},  \\
      D_t^{k-1}u(0,x) = g_k(x), & k=1,\ldots,m,
   \end{cases}  
\end{align}
where $A_{m-j}(t,x,D_x)$ is a pseudo-differential operator of order $m-j$, $D_t = i^{-1}\partial_t$ and $D_x = i^{-1}\partial_x$.  We assume that the principal part of these operators, denoted by $A_{(m-j)}$ is independent of the special variable $x$, i.e. 
	\[
	A_{m-j}(t,x,D_x)=A_{(m-j)}(t,D_x)+(A_{m-j}(t,x,D_x)-A_{(m-j)}(t,D_x)),
	\]
 for all $j=0,\cdots, m-1$. We work under the hypothesis that the roots $\lambda_i(t,\xi)$ of the characteristic polynomial 
\[
\tau^m-\sum_{j=0}^{m-1}A_{(m-j)}(t,\xi)\tau^j = \prod_{i=1}^m (\tau - \lambda_{i}(t,\xi)).
\]
are real-valued functions in  $C^1([0,T], S^1(\R^{2n}))$, hence the equation is hyperbolic with multiplicities or also called weakly hyperbolic.
The right-hand side $f(t,x)$ is assumed to be continuous in $t$ and smooth in $x$.  

The main aim of this paper is to establish under which conditions on the roots and the lower order terms (Levi conditions) the Cauchy problem \eqref{eq-m_order_HE} is $C^\infty$ well-posed. We want our conditions to allow variable multiplicities, i.e., up to order $m$.

In the sequel we give a brief and non exhausting overview of the existing literature on weakly hyperbolic equations.

The well-posedness of the Cauchy problem for weakly hyperbolic equations has been a challenging problem in mathematics since a long time. Unlike strictly hyperbolic equations, weakly hyperbolic equations are characterised by their principal part having real, but not necessarily distinct, roots. This distinction results in more complex and unpredictable behaviour, as illustrated by the following second-order example in one space dimension:
\begin{equation}\label{eq-2HE-example}
\begin{cases}
    \partial_t^2 u - a(t)\partial_x^2 u  + b(t)\partial_x u = 0, \\
    u(0,x)=g_1(x), \quad \partial_t u(0,x) = g_2(x).
\end{cases}
\end{equation}
In \cite{Ol-70} Oleinik investigated the $C^\infty$ well-posedness of the Cauchy problem \eqref{eq-2HE-example} and obtained the sufficient condition
\[
t|b(t)|^2 \leq C a(t) + \partial_t a(t),
\]
which holds uniformly in $t\in[0,T]$ for some positive constant $C$. The understanding of this Cauchy problem was further refined by Colombini and Spagnolo in the celebrated paper \cite{ColSpa-82}, where they constructed a $C^{\infty}$ function $a(t)\geq 0$, such that the homogeneous problem \eqref{eq-2HE-example} (i.e., with $b=0$) is not $C^{\infty}$ well-posed. 

Additional insights into the $C^{\infty}$ well-posedness of second-order hyperbolic equations have been provided by many authors. Nishitani in \cite{Nish-80} explored necessary and sufficient conditions on lower order terms for equations with a single space variable and analytic coefficients. Colombini, De Giorgi, and Spagnolo in \cite{ColDeSpa-79}, and Colombini, Jannelli, and Spagnolo in \cite{ColJanSpa-83}, focused on equations with coefficients dependent solely on time, examining both $C^{\infty}$ and Gevrey well-posedness. In \cite{ColGrOrTa-23} Colombini, Gramchev, Orrù, and Tagilalatela have obtained sufficient conditions for the $C^{\infty}$ well-posedness of the following third order Cauchy problem:
\begin{equation}\label{eq-3HE-example}
    \begin{cases}
        \partial_t^3 u + a_{2,1}(t)\partial_t^2 \partial_x u + a_{1,2}(t)\partial_t \partial_x u+ a_{0,3}(t) \partial_x^3 u + \text{lower order terms} = f(t,x),\\
        u(0,x) = g_1(x), \quad \partial_t  u(0,x) = g_2(x), \quad \partial_t^2  u(0,x) = g_3(x).
    \end{cases}
\end{equation}
Their research particularly focuses on cases where the characteristics roots have constant multiplicity and the coefficients in the principal part are of class $C^2$. In contrast, Wakabayashi in \cite{Wak-15} explored similar problems but with double characteristics and analytic coefficients. For further results on third order hyperbolic equations we refer the reader to \cite{ColOr-99} and \cite{ColTag-06} and the recent extension by Nishitani in \cite{Nish-22}.  
Passing now to arbitrary higher order equations, the majority of the known results of $C^\infty$ well-posedness hold for specific classes of equations: 
\begin{itemize}
\item[-] equations with constant coefficients or with principal part with constant coefficients (see \cite{Gar-51, Hor-85, Sve-70, Wak-80})
\item[-] equations with constant multiplicities or multiplicities up to order two (see \cite{BerPP:12, Cha-74, DeP-72, FlaStr-71, PP:04, PP:09})
\item[-] homogenous equations with coefficients dependent on one variable (either space or time) (see \cite{ColOr-99, ColTag-06, SpaTag-07})
\end{itemize}
To our knowledge general results for non-homogenous higher order hyperbolic equations with $(t,x)$-dependent coefficients and variable multiplicities in any space dimension are still missing.
This paper is the first attempt to provide results of $C^\infty$ well-posedness for a wider class of higher order hyperbolic equations, without restrictive assumptions on the multiplicities or the space dimension.  
The main idea is to transform the higher order equation into a first order system of pseudo-differential equations and then apply our knowledge of non-diagonalisable hyperbolic systems developed in \cite{GarJRuz, GarJRuz2}
to formulate suitable Levi conditions on the lower order terms that will guarantee $C^\infty$ well-posedness. Under this point of view this paper is the natural application of \cite{GarJRuz, GarJRuz2} to higher order hyperbolic equations. 
Note that to better control the lower order terms we work here on equations that are only time-dependent in the principal part. This choice leads to a specific formulation of the Levi conditions on the lower order terms (in line with recent results on $x$-dependnet hyperbolic equations in \cite{SpaTa-22}) however it still allows us to have variable multiplicities of any order, which is a big achievement in comparison to the previous results in this field.

\subsection{Main result}
Before stating our main result let us recall some basic definitions and fix some notations. Let us fix the notations and definitions of symbol classes. 

We say that a function $a(x,\xi)\in C^{\infty}(\mathbb R^n \times \mathbb R^n)$ belongs to $S^m_{1,0}(\mathbb R^n \times \mathbb R^n)$ if there exist constants $C_{\alpha,\beta}$ such that 
\begin{equation*}
    \forall \alpha, \beta \in \mathbb N_0^n \quad : \quad  |\partial_x^{\alpha}\partial_{\xi}a(x,\xi)| \leq C_{\alpha, \beta} \lara{\xi}^{m - \beta} \quad \forall (x,\xi) \in \mathbb R^n \times \mathbb R^n.
\end{equation*}
We denote by $C([0,T],S^m_{0,1}(\mathbb R^n \times \mathbb R^n))$ the space of all symbols $a(t,x,\xi)\in S^m(\mathbb R^n \times \mathbb R^n)$ which are continuous with respect to $t$. If there is no question about the domain under consideration, we will abbreviate the symbol classes by $S_{1,0}^m$ and $C([0,T],S^m_{0,1}(\mathbb R^n \times \mathbb R^n))$, respectively,  or simply by $S^m$ and $CS^m$. 

Going back to the Cauchy problem \eqref{eq-m_order_HE} we will make use of the following notations:
\[
\begin{split}
b_{j}&=A_{m-j+1}(t,x,\xi)\lara{\xi}^{j-m},\\
b_{(j)}&=A_{(m-j+1)}(t,\xi)\lara{\xi}^{j-m},\\
d_{m,k} &= \sum_{j=k}^{m} (b_j - b_{(j)}) \omega_{j,k},
 \end{split}
\]
for $j,k=1,\cdots, m$, where we set $\omega_{k,k}=1$ and 
\begin{align*}
    \omega_{j,k} = \sum_{\alpha_1 + \alpha_2 + \cdots + \alpha_k=j-k} \lambda_1^{\alpha_1}\lambda_2^{\alpha_2}\cdots \lambda_k^{\alpha_k}\lara{\xi}^{k-j}.
\end{align*}
for $j>k$. We then define the transformation matrix 
\begin{align*}
	T(t,\xi)  = \begin{pmatrix}
	1 & 0 & 0 & \ldots & 0 & 0 & 0\\
	\omega_{2,1} & 1 & 0 &  \ldots & 0 & 0 & 0\\
	\omega_{3,1} & \omega_{3,2} & 1 &  \ldots & 0 & 0 & 0 \\
	\vdots & \vdots & \vdots & \ddots & \vdots & \vdots & \vdots \\
	\omega_{m-2,1} & \omega_{m-2,2} & \omega_{m-2,3} & \ldots & 1 & 0 & 0 \\
	\omega_{m-1,1} & \omega_{m-1,2} & \omega_{m-1,3} & \ldots & \omega_{m-1,m-2} & 1 & 0 \\
	\omega_{m,1} & \omega_{m,2} & \omega_{m,3} & \ldots & \omega_{m,m-2}& \omega_{m,m-1} & 1
	\end{pmatrix}.
\end{align*}
which we will prove to be invertible in Proposition \ref{Prop_Schur_decomposition}. The matrix $T^{-1}$ is employed to define the following symbols for $i>j$:
\[
e_{i,j} = \sum_{j < k \leq i} (T^{-1})_{i,k}, D_t\omega_{k,j}.
\]
We are now ready to state our main result and its immediate corollary.
\subsubsection*{\bf Main Theorem}
 \emph{    Let $n\geq 1$ and $m\geq 2$ and consider the Cauchy problem 
     \begin{align*}
   \begin{cases}
      D_t^mu-\sum_{j=0}^{m-1}A_{m-j}(t,x,D_x)D_t^ju=f(t,x), & t\in[0,T],\, x\in\R^{n},  \\
      D_t^{k-1}u(0,x) = g_k(x), & k=1,\ldots,m,
   \end{cases}  
\end{align*}
where $A_{m-j}$ is a pseudo-differential operator of order $m-j$ with principal part $A_{(m-j)}$ independent of $x$ and $f(t,x)$ is in $C([0,T],C^{\infty}(\mathbb R^n))$. Assume that the characteristic polynomial associated to this equation has $m$ real roots $\lambda_i(t,\xi)\in C^1([0,T], S^1(\R^{2n}))$. If the Levi conditions 
\[
\begin{split}
e_{i,j} &\in C([0,T],S^{j-i}(\mathbb{R}^{2n})), \\
d_{m,k} - e_{m,k}&\in C([0,T],S^{k-m}(\mathbb{R}^{2n})),	
\end{split}
\]
    for $i>j$, $2\le i\le m-1$, and for all $k=1,\ldots,m-1$, then the Cauchy problem is $C^\infty$ well-posed.}

 \subsubsection*{\bf Corollary}
\emph{ If the the roots $\lambda_j$ are constant for $j=1,\cdots, m-2$ and 
\[
\begin{split}
d_{m,k}&\in C([0,T],S^{k-m}(\mathbb{R}^{2n})),\\
d_{m,m-1}-D_t\lambda_{m-1}\lara{\xi}&\in C([0,T],S^{-1}(\mathbb{R}^{2n})),
\end{split}
\]
for $k=1,\cdots,m-2$, then the Cauchy problem is $C^\infty$ well-posed.}

\vspace{0.2cm}
 
Note that the formulation of the main theorem is clear in view of the results on non-diagonalisable systems proven in \cite{GarJRuz, GarJRuz2}, however the corollary allow us to formulate easy examples of higher order hyperbolic equations whose Cauchy problem is $C^\infty$ well-posed. Indeed, the Levi conditions above given in terms of symbol order can be translated into conditions on the lower order terms. As an explanatory example, consider the third order equation in space dimension one with characteristic roots 
	\[
	\begin{split}
	\lambda_1 & = \xi,\\
	\lambda_2 & = a(t)\xi,\\
	\lambda_3 & = b(t)\xi.
	\end{split}
	\]
and lower order terms 	
 \[
	a_{0,2}(t,x)D_x^2 + a_{1,1}(t,x)D_xD_t+a_{2,0}(t,x)D_t^2 + a_{0,1}(t,x)D_x + a_{1,0}(t,x)D_t+ a_{0,0}(t,x).
\]
The Levi conditions are fulfilled imposing
 \[
	\begin{split}
	a_{1,1}(t,x) + (1+a(t))a_{2,0}(t,x) - D_ta(t) &= 0,\\
	a_{0,2}(t,x) + a_{1,1}(t,x) + a_{2,0}(t,x)&=0,\\
	a_{0,1}(t,x) + a_{1,0}(t,x)&=0.
	\end{split}
	\]
Note that the specific choice of lower order terms is balanced by variable multiplicities and non-restrictive assumptions on $a(t)$ and $b(t)$, which indeed are simply required to be of class $C^1$. Analogously, one can consider the fourth order example with characteristic roots 

\begin{equation*}
		\lambda_1 = \xi, \,\, \lambda_2 = -\xi, \,\,\, \lambda_3 = \sqrt{a(t)}\xi, \,\,\, \lambda_4 =-\sqrt{a(t)}\xi,
 	\end{equation*}
 	where $a(t)\ge 0$ and lower order terms fulfilling the Levi conditions
  	\begin{align*}
  	&	a_{2,1}(t,x) + \sqrt{a(t)}a_{3,0}(t,x) = D_t\sqrt{a(t)} , \\
  	&   a_{1,2}(t,x) + a_{3,0}(t,x) = 0, \\
  	&  	a_{2,1}(t,x) + a_{0,3}(t,x) = 0, \\
  	&	a_{0,2}(t,x) + a_{2,0}(t,x) = 0, \\
  	&	a_{0,1}(t,x) + a_{1,0}(t,x) = 0,\\
  	& 	a_{1,3} = a_{3,1} = a_{1,1} = 0.
  	\end{align*}
We refer the reader to Example \ref{ex_3_3} and Example \ref{ex_4_4} for more details.

The paper is organised as follows. In Section 2 we collect some preliminaries needed to state and prove our main theorem. In particular, we transform the $m$-order equation into a system of first order pseudo-differential equations and we reduce it into upper-triangular form while a Schur decomposition that will define the matrix $T(x,\xi)$. We also recall the $C^\infty$ result for non-diagonalisable hyperbolic systems proven in \cite{GarJRuz}. Section 3 is dedicated to the proof of our main theorem and corollary and to some explanatory examples. The paper ends with Section 4 where we discuss how our result compares with well-known $C^\infty$ well-posdenss results for second and third order hyperbolic equations and we provide further examples.

\section{Preliminaries}

It is standard procedure when dealing with higher order equations to transform the equation into a system of first order pseudo-differential equations. In detail, making use of 
\[
{u_k=D_t^{k-1}\lara{D_x}^{m-k}u},
\]
for $k=1,\ldots, m$, where $\lara{D_x}$ is the pseudo-differential operator with symbol $\lara{\xi}=(1+|\xi|^2)^{\frac{1}{2}}$, the Cauchy problem \eqref{eq-m_order_HE} can be re-written as
\begin{equation}
\label{system_HE}
\begin{split}
D_tU&=A(t, D_x)U+B(t,x,D_x)U+F,\\
U(0,x)&=(\lara{D_x}^{m-1}g_1,\lara{D_x}^{m-2}g_2,\cdots g_m)^T
\end{split}
\end{equation}
where $U=(u_1,u_2,\cdots,u_m)^T$, 
\begin{align}\label{matrix-A}
A(t,\xi) =& \begin{pmatrix}
0 & \lara{\xi} & 0& \dots & 0 &0\\
0 & 0 & \lara{\xi} & \dots & 0 & 0\\
0 & 0 & 0 & \ddots & 0 & 0 \\
\vdots & \vdots & \vdots & \dots & \vdots & \vdots \\
0 & 0 & 0 &\dots & 0 &\lara{\xi} \\
b_{(1)} & b_{(2)} & b_{(3)}& \dots &b_{(m-1)} & b_{(m)}
\end{pmatrix},
\end{align}
\begin{align}\label{matrix-B}
B(t,x,\xi) =& \begin{pmatrix}
0 & 0 & 0& \dots & 0 &0\\
0 & 0 & 0 & \dots & 0 & 0\\
0 & 0 & 0 & \ddots & 0 & 0 \\
\vdots & \vdots & \vdots & \dots & \vdots & \vdots \\
0 & 0 & 0 &\dots & 0 & 0 \\
b_1-b_{(1)} & b_2-b_{(2)} & b_3-b_{(3)}& \dots &b_{m-1}-b_{(m-1)} & b_m-b_{(m)}
\end{pmatrix},
\end{align}
with
\[
\begin{split}
b_{j}&=A_{m-j+1}(t,x,\xi)\lara{\xi}^{j-m},\\
b_{(j)}&=A_{(m-j+1)}(t,\xi)\lara{\xi}^{j-m},
  \end{split}
\]
for $j=1,\cdots, m$, and $F=(0,0,\cdots, 0, f)^T$. The matrix $A$ is in Sylvester form and has the roots $\lambda_j(t,\xi)$'s as eigenvalues. In the rest of the paper we will therefore focus on the Cauchy problem \eqref{system_HE}. As a first step to prove $C^\infty$ well-posedness we will reduce the matrix $A$ into upper-triangular form. This is a known as Schur decomposition. In our particular case the transformation matrix $T$ is defined by symbols of order $0$ as proven in the following proposition.

\subsection{Schur decomposition}
 
\begin{proposition}\label{Prop_Schur_decomposition}
    Let $A(t,\xi)$ be a $m\times m$ matrix valued symbol $A(t,\xi)$ as defined in \eqref{matrix-A} with real eigenvalues $\lambda_1,\ldots,\lambda_m$.
    There exists a unitary $m \times m$ matrix valued symbol $T(t,\xi)$ such that $J = T^{-1}AT$ is an upper triangular matrix with diagonal elements $\lambda_1,\ldots,\lambda_m$.
    More precisely,  
   \begin{align*}
	T^{-1} A T = \begin{pmatrix}
	\lambda_1 & \lara{\xi} & 0 & \cdots & 0 & 0 \\
	0 & \lambda_2 & \lara{\xi} &  \cdots & 0 & 0 \\
	0 & 0 & \lambda_3 & \cdots & 0 & 0 \\
	\vdots & \vdots & \vdots & \ddots & \vdots & \vdots \\
	0 & 0 & 0 & \cdots & \lambda_{m-1} & \lara{\xi} \\
	0 & 0 & 0 & \cdots & 0 & \lambda_{m}
	\end{pmatrix},
\end{align*}
with
\begin{align*}
	T(t,\xi)  = \begin{pmatrix}
	1 & 0 & 0 & \ldots & 0 & 0 & 0\\
	\omega_{2,1} & 1 & 0 &  \ldots & 0 & 0 & 0\\
	\omega_{3,1} & \omega_{3,2} & 1 &  \ldots & 0 & 0 & 0 \\
	\vdots & \vdots & \vdots & \ddots & \vdots & \vdots & \vdots \\
	\omega_{m-2,1} & \omega_{m-2,2} & \omega_{m-2,3} & \ldots & 1 & 0 & 0 \\
	\omega_{m-1,1} & \omega_{m-1,2} & \omega_{m-1,3} & \ldots & \omega_{m-1,m-2} & 1 & 0 \\
	\omega_{m,1} & \omega_{m,2} & \omega_{m,3} & \ldots & \omega_{m,m-2}& \omega_{m,m-1} & 1
	\end{pmatrix}.
\end{align*}
$T(x,\xi)$ has determinant $1$ and the inverse matrix $T^{-1}$ is lower triangular with entries $1$ on the diagonal and 
\begin{align*}
	(T^{-1})_{i,j} = - \omega_{i,j} - \sum_{k=j+1}^{i-1} \omega_{i,k}(T^{-1})_{k,j},
\end{align*}
for $i>j$, where 
\begin{align*}
    \omega_{j,k} = \sum_{\alpha_1 + \alpha_2 + \cdots + \alpha_k=j-k}  \lambda_1^{\alpha_1}\lambda_2^{\alpha_2}\cdots \lambda_k^{\alpha_k}\lara{\xi}^{k-j},
\end{align*}
for $j>k$, with $\alpha_1,\cdots\alpha_k\in\mathbb{N}_0$.
\end{proposition}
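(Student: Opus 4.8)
The plan is to verify directly that the explicit matrix $T$ in the statement has the three asserted properties — invertibility with $\det T=1$, the stated formula for $T^{-1}$, and $T^{-1}AT=J$, the bidiagonal upper triangular matrix with diagonal $\lambda_1,\dots,\lambda_m$ — reducing everything to one combinatorial recursion for complete homogeneous symmetric polynomials together with the fact that $\lambda_1,\dots,\lambda_m$ are the roots of the characteristic polynomial. First I would reformulate the entries of $T$: writing $h_d(z_1,\dots,z_r)=\sum_{\alpha_1+\cdots+\alpha_r=d}z_1^{\alpha_1}\cdots z_r^{\alpha_r}$ for the complete homogeneous symmetric polynomial of degree $d$, with $h_0\equiv 1$ and $h_d\equiv 0$ for $d<0$, the definition of $\omega_{j,k}$ reads $\omega_{j,k}=h_{j-k}(\lambda_1,\dots,\lambda_k)\lara{\xi}^{k-j}$ for $j\ge k$, while $\omega_{j,k}=0$ for $j<k$; hence $T$ is lower triangular with unit diagonal and $\det T=1$ is immediate. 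Since each $\lambda_i\in C^1([0,T],S^1(\R^{2n}))$ and $h_{j-k}$ is homogeneous of degree $j-k$ in the $\lambda_i$'s, we get $h_{j-k}(\lambda_1,\dots,\lambda_k)\in C^1([0,T],S^{j-k})$ and therefore $\omega_{j,k}\in C^1([0,T],S^0)$, so $T\in C^1([0,T],S^0)$. The formula for $T^{-1}$ follows by forward substitution: $T^{-1}$ is again lower triangular with unit diagonal, and expanding $(TT^{-1})_{i,j}=0$ for $i>j$ as $(T^{-1})_{i,j}+\sum_{k=j}^{i-1}\omega_{i,k}(T^{-1})_{k,j}=0$ gives the stated recursion, from which $(T^{-1})_{i,j}\in C^1([0,T],S^0)$ follows by induction on $i-j$.

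The core assertion is $AT=TJ$, which I would check column by column. As $J$ is bidiagonal, its $k$-th column equals $\lambda_k e_k+\lara{\xi}e_{k-1}$ (with $e_0:=0$), so, writing $t_k$ for the $k$-th column of $T$, the claim becomes $A\,t_k=\lambda_k\,t_k+\lara{\xi}\,t_{k-1}$ for $k=1,\dots,m$. Using the Sylvester structure of $A$ — rows $1,\dots,m-1$ act by $e_i\mapsto\lara{\xi}e_{i-1}$, while the last row has entries $b_{(1)},\dots,b_{(m)}$ — the components $1,\dots,m-1$ of this vector identity amount, after cancelling the relevant power of $\lara{\xi}$, to
\[
h_{d+1}(\lambda_1,\dots,\lambda_k)=\lambda_k\,h_d(\lambda_1,\dots,\lambda_k)+h_{d+1}(\lambda_1,\dots,\lambda_{k-1}),
\]
the standard recursion for $h_d$ (split the degree-$(d+1)$ monomials according to whether $\lambda_k$ occurs); these components are routine.

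The only delicate point is the last component of $A\,t_k=\lambda_k\,t_k+\lara{\xi}\,t_{k-1}$, which reads $\sum_{i=k}^{m}\omega_{i,k}b_{(i)}=\lambda_k\omega_{m,k}+\lara{\xi}\omega_{m,k-1}$, and here the hypothesis on the roots enters. Since $b_{(j)}=A_{(m-j+1)}(t,\xi)\lara{\xi}^{j-m}$, the characteristic polynomial may be rewritten as $P(\tau)=\tau^m-\sum_{k=1}^{m}b_{(k)}\lara{\xi}^{m-k}\tau^{k-1}=\prod_{i=1}^m(\tau-\lambda_i)$; setting $Q(\tau):=\tau^m-P(\tau)$ and using the expansion at $\tau=\infty$
\[
\frac{1}{\prod_{i=1}^k(\tau-\lambda_i)}=\sum_{l\ge k}h_{l-k}(\lambda_1,\dots,\lambda_k)\,\tau^{-l},
\]
one recognises $\sum_{i=k}^{m}\omega_{i,k}b_{(i)}$ as $\lara{\xi}^{k-m}$ times the coefficient of $\tau^{-1}$ in $Q(\tau)/\prod_{i\le k}(\tau-\lambda_i)=\tau^m/\prod_{i\le k}(\tau-\lambda_i)-\prod_{i=k+1}^m(\tau-\lambda_i)$; since the last summand is a polynomial, that coefficient is $h_{m+1-k}(\lambda_1,\dots,\lambda_k)$, so $\sum_{i=k}^{m}\omega_{i,k}b_{(i)}=h_{m+1-k}(\lambda_1,\dots,\lambda_k)\lara{\xi}^{k-m}$, which is exactly $\lambda_k\omega_{m,k}+\lara{\xi}\omega_{m,k-1}$ by the recursion for $h_d$ above (for $k=1$ this is just the statement that $\lambda_1$ annihilates $P$). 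Collecting everything yields $T\in C^1([0,T],S^0)$ invertible with $\det T=1$, $T^{-1}$ as claimed, and $T^{-1}AT=J$, which is the assertion. The main obstacle is precisely this last-row identity, where one genuinely needs the full factorisation of the characteristic polynomial rather than merely that each $\lambda_i$ is a root; the generating-function computation above is the cleanest route I know.
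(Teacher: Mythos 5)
Your proof is correct and follows the same skeleton as the paper's: both reduce $T^{-1}AT=J$ to the column equations $(A-\lambda_k I)t_k=\lara{\xi}\,t_{k-1}$, and in both the first $m-1$ components come down to the recursion $h_{d+1}(\lambda_1,\dots,\lambda_k)=\lambda_k h_d(\lambda_1,\dots,\lambda_k)+h_{d+1}(\lambda_1,\dots,\lambda_{k-1})$ for the complete homogeneous symmetric polynomials hidden in the $\omega_{j,k}$. The one substantive difference is the treatment of the last component, which is the only place the characteristic polynomial genuinely enters: the paper builds the columns recursively and merely asserts that the $m$-th scalar equation is confirmed by the characteristic equation after expressing each $b_{(j)}$ in terms of the roots, whereas you prove the needed identity $\sum_{i=k}^m\omega_{i,k}b_{(i)}=h_{m+1-k}(\lambda_1,\dots,\lambda_k)\lara{\xi}^{k-m}$ explicitly by extracting the coefficient of $\tau^{-1}$ from $Q(\tau)/\prod_{i\le k}(\tau-\lambda_i)$ via the Laurent expansion of $1/\prod_{i\le k}(\tau-\lambda_i)$ at infinity. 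I checked this computation and it is right; it closes the step the paper leaves to the reader and correctly identifies that for $k\ge 2$ one needs the full factorisation, not just $P(\lambda_k)=0$. Your derivation of the stated recursion for $(T^{-1})_{i,j}$ from $(TT^{-1})_{i,j}=0$ and your observation that $\omega_{j,k}\in C^1([0,T],S^0)$ (which the paper relegates to a remark) are also fine. One caveat you inherit from the statement rather than introduce: $T$ is unipotent lower triangular, not unitary as the proposition (and the paper's proof) claims; your argument correctly relies only on $\det T=1$.
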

\begin{proof}[Proof of Proposition \ref{Prop_Schur_decomposition}]
This is a constructive proof. We aim to identify a matrix $T$ satisfying the equation $AT=TJ$, where $A(t,\xi)$ is defined in \eqref{matrix-A} and
    \begin{align*}
        J(t,\xi) = \begin{pmatrix}
            \lambda_1(t,\xi) & \lara{\xi} & 0 & \dots & 0 & 0 \\
            0 & \lambda_2(t,\xi) & \lara{\xi} &  \dots & 0 & 0 \\
            0 & 0 & \lambda_3 (t,\xi) &  \dots & 0 & 0 \\
            \vdots & \vdots & \vdots & \ddots & \vdots & \vdots \\
            0 & 0 & 0 & \dots & \lambda_{m-1}(t,\xi)  & \lara{\xi} \\
            0 & 0 & 0 & \dots & 0 &\lambda_{m} (t,\xi) 
        \end{pmatrix}.
    \end{align*}
    The relation $AT=TJ$ can be expressed as follows:
    \begin{align*}
        &A \begin{pmatrix}
            T_1 & T_2 & T_3 & \dots & T_{m-1} & T_m
        \end{pmatrix} = \begin{pmatrix}
            T_1 & T_2 & T_3 & \dots & T_{m-1} & T_m
        \end{pmatrix} J \\
        &=\begin{pmatrix}
            \lambda_1 T_1 & T_1\lara{\xi} + \lambda_2 T_2 & T_2\lara{\xi} + \lambda_3 T_3 & \dots & T_{m-2}\lara{\xi} + \lambda_{m-1} T_{m-1} &T_{m-1}\lara{\xi} + \lambda_m T_m
        \end{pmatrix},
    \end{align*}
    where $T_j$, for $j=1,\ldots,m$, denotes the $j$-th column vector of the matrix $T$. We denote the entries of $T_j$ with $\omega_{h,j}$, with $h=1,\cdots,m$. To determine the matrix $T$, we systematically solve the following sequence of equations:
    \begin{align*}
        (A - \lambda_1 I)T_1 &= 0,\\
        (A - \lambda_2 I)T_2 &= T_1\lara{\xi},\\
        (A - \lambda_3 I)T_3 &= T_2\lara{\xi},\\
        \vdots \\
        (A - \lambda_{m-1} I)T_{m-1} &= T_{m-2}\lara{\xi},\\
        (A - \lambda_m I)T_{m} &= T_{m-1}\lara{\xi}.
    \end{align*}
    This will lead us to formulate $\omega_{h,j}$ in terms of the eigenvalues of $A$ and powers of $\lara{\xi}$.
    
 \textbf{Step 1.} Solving the equation $(A - \lambda_1 I)T_1 =0$ yields: 
 \begin{align*}
     -\lambda_1 \omega_{1,1} + \lara{\xi} \omega_{2,1} = 0, \quad &\Longrightarrow  \quad \omega_{1,1} =1 \quad \text{ and } \quad \omega_{2,1} = \lambda_1\lara{\xi}^{-1}, \\
     -\lambda_1 \omega_{2,1} + \lara{\xi} \omega_{3,1} = 0, \quad &\Longrightarrow  \quad  \omega_{3,1} = \lambda_1^2\lara{\xi}^{-2}, \\
      -\lambda_1 \omega_{3,1} + \lara{\xi} \omega_{4,1} = 0, \quad &\Longrightarrow  \quad \omega_{4,1} = \lambda_1^3\lara{\xi}^{-3}, \\
      & \vdots \\
      -\lambda_1 \omega_{m-1,1} + \lara{\xi} \omega_{m,1} = 0, \quad &\Longrightarrow  \quad \omega_{m,1} = \lambda_1^{m-1}\lara{\xi}^{-m+1},
 \end{align*}
 and the validity of the last term 
 \begin{equation*}
     b_{(1)}\omega_{1,1} +  b_{(2)}\omega_{2,1} +  b_{(3)}\omega_{3,1} + \cdots  + b_{(m-1)}\omega_{m-1,1} +  \omega_{m,1}(b_{(m)} - \lambda_1) = 0
 \end{equation*}
 is ensured by $\det (A-\lambda_1I)=0$.
Hence, we can write the first column vector $T_1$ as
 \begin{align*}
     T_1 = \begin{pmatrix}
         1 & \omega_{2,1} & \omega_{3,1} & \ldots & \omega_{m-1,1} &   \omega_{m,1}
     \end{pmatrix}^T,
 \end{align*}
 where the coefficients $\omega_{j,1}$ are given by 
 \begin{align*}
    \omega_{j,1} =  \lambda_1^{j-1}\lara{\xi}^{1-j},
\end{align*}
for $j=2,\ldots,m$.

\textbf{Step 2.} In a similar manner, solving the equation $(A - \lambda_2 I)T_2 = T_1\lara{\xi}$ yields:  
\begin{align*}
     -\lambda_2 \omega_{1,2} + \lara{\xi} \omega_{2,2} = \lara{\xi}, \quad &\Longrightarrow  \quad \omega_{1,2} =0 \quad \text{ and } \quad \omega_{2,2} = 1 \\
     -\lambda_2 \omega_{2,2} + \lara{\xi} \omega_{3,2} = \lambda_1 \quad &\Longrightarrow  \quad  \omega_{3,2} = (\lambda_1 + \lambda_2)\lara{\xi}^{-1}, \\
      -\lambda_2 \omega_{3,2} + \lara{\xi} \omega_{4,2} = \lambda_1^2\lara{\xi}^{-1} \quad &\Longrightarrow  \quad \omega_{4,2} = (\lambda_1^2 + \lambda_1\lambda_2 + \lambda_2^2)\lara{\xi}^{-2}, \\
      & \vdots \\
      -\lambda_2 \omega_{m-1,2} + \lara{\xi} \omega_{m,2} = \lambda_1^{m-1}\lara{\xi}^{2-m} \quad &\Longrightarrow  \quad \omega_{m,2} =  \sum_{\alpha_1 + \alpha_2 =m-2} \lambda_1^{\alpha_1}\lambda_2^{\alpha_2}\lara{\xi}^{2-m}, 
 \end{align*}
 and the validity of the last term
 \begin{equation*}
     b_{(1)}\omega_{1,2} +  b_{(2)}\omega_{2,2} +  b_{(3)}\omega_{3,2} + \cdots  + b_{(m-1)}\omega_{m-1,2} +  \omega_{m,2}(b_{(m)} - \lambda_2) = \lambda_1^{m-1}\lara{\xi}^{2-m},
 \end{equation*}
 is confirmed by employing the characteristic equation  
 $$\tau^m - \sum_{j=1}^m b_{(j)}\lara{\xi}^{m-j}\tau^{j-1} = \prod_{j=1}^m(\tau - \lambda_j),$$ 
and writing each $b_{(j)}$ for $j=1,\ldots,m$ in terms of the characteristic roots $\lambda_1,\ldots,\lambda_m $.
Hence, the second column vector $T_2$ is can be written as
\begin{align*}
     T_2 = \begin{pmatrix}
         0 & 1 & \omega_{3,2} & \omega_{4,2} & \ldots & \omega_{m-1,2} &   \omega_{m,2}
     \end{pmatrix}^T,
 \end{align*}
 where  
 \begin{align*}
    \omega_{j,2} = \sum_{\alpha_1 + \alpha_2 =j-2} \lambda_1^{\alpha_1}\lambda_2^{\alpha_2}\lara{\xi}^{2-j},
\end{align*}
for $j=3,\ldots,m$.

\textbf{Step 3.} Continuing with this process, by solving the equation $(A - \lambda_3 I)T_3 = T_2\lara{\xi}$, we determine the third column vector $T_3$ as   
\begin{align*}
     T_3 = \begin{pmatrix}
         0 & 0 & 1 & \omega_{4,3} & \omega_{5,3} & \ldots & \omega_{m-1,3} &   \omega_{m,3}
     \end{pmatrix}^T,
 \end{align*}
 where for $j=4,\ldots,m$, the coefficients $\omega_{j,3}$ are given by 
 \begin{align*}
    \omega_{j,3} = \sum_{\alpha_1 + \alpha_2 + \alpha_3 =j-3} \lambda_1^{\alpha_1}\lambda_2^{\alpha_2}\lambda_3^{\alpha_3}\lara{\xi}^{3-j}.
\end{align*}
 Analogously, from the equation $(A - \lambda_j I)T_j = T_{j-1}\lara{\xi}$ we get the entries of $T_j$ with $\omega_{j,j}=1$ and 
 \begin{align*}
    \omega_{j,k} = \sum_{\alpha_1 + \alpha_2 + \cdots + \alpha_k=j-k}  \lambda_1^{\alpha_1}\lambda_2^{\alpha_2}\cdots \lambda_k^{\alpha_k}\lara{\xi}^{k-j},
\end{align*}
for $j>k$, with $\alpha_1,\cdots\alpha_k\in\mathbb{N}_0$.

\textbf{Step m-1:} In the penultimate step, solving the equation $(A - \lambda_{m-1} I)T_{m-1} = T_{m-2}\lara{\xi}$ yields the column vector $T_{m-1}$ as  
\begin{align*}
     T_{m-1} = \begin{pmatrix}
         0 & 0 & 0 & \ldots & 1 & \omega_{m,m-1}
     \end{pmatrix}^T,
 \end{align*}
 where the coefficient $\omega_{m,m-1}$ is determined by
 \begin{align*}
    \omega_{m,m-1} &= \sum_{\alpha_1 + \alpha_2 + \dots +\alpha_{m-1} = 1}  \lambda_1^{\alpha_1}\lambda_2^{\alpha_2}\lambda_3^{\alpha_3}\cdots\lambda_{m-1}^{\alpha_{m-1}}\lara{\xi}^{-1} \\
    & = (\lambda_1 + \lambda_2 + \lambda_3 + \cdots + \lambda_{m-1})\lara{\xi}^{-1}.
\end{align*}
\textbf{Step m.} In the final step, solving $(A - \lambda_{m} I)T_{m} = T_{m-1}\lara{\xi}$ allows us to determine the column vector $T_{m}$ as   
\begin{align*}
     T_{m} = \begin{pmatrix}
         0 & 0 & 0 & \ldots & 0 & 1 
     \end{pmatrix}^T.
 \end{align*}
Given that $T$ is a unitary matrix with determinant $1$, it is invertible. The matrix $T^{-1}$ is lower-triangular, identically $1$ on the diagonal and  with entries  \begin{align*}
	(T^{-1})_{i,j} = - \omega_{i,j} - \sum_{k=j+1}^{i-1} \omega_{i,k}(T^{-1})_{k,j},
\end{align*}
for $i>j$. This concludes the proof.
\end{proof}
\begin{remark}
Note that the entries of the matrices $T$ and $T^{-1}$ are polynomials in the roots $\lambda_i$, $i=1,\cdots, n$, they are also symbols of order $0$, independent of $x$ and of class $C^1$ with respect to $t$.
\end{remark}
\begin{remark}
\label{remark_ut}
Proposition \ref{Prop_Schur_decomposition} holds also when the matrix $A$ depends on $x$. However, in this case the equality $J=T^{-1}AT$, valid at the level of the symbol matrices, can be transferred to the level of the operators by adding a matrix of remainders terms of order $0$. This is a direct consequence of the symbolic calculus. 
\end{remark}

\begin{example}
As explanatory examples, we write down explicitly $T$ and $T^{-1}$ when $m=2,3,4$.  

\begin{itemize}
    \item For the case $m=2$, the matrix 
$T$ and its inverse $T^{-1}$ are given by
    \begin{align*}
        T =  \begin{pmatrix}
            1 & 0 \\
            \lambda_1 \lara{\xi}^{-1} & 1
        \end{pmatrix} \quad \text{ and } \quad T^{-1} = \begin{pmatrix}
            1 & 0 \\
            - \lambda_1 \lara{\xi}^{-1} & 1
        \end{pmatrix}.
    \end{align*}
    \item For the case $m=3$, we have  
    \begin{align*}
        T =  \begin{pmatrix}
            1 & 0 & 0\\
           \lambda_1 \lara{\xi}^{-1} & 1 & 0\\
            \lambda_1^2 \lara{\xi}^{-2} & (\lambda_1 + \lambda_2)\lara{\xi}^{-1} & 1
        \end{pmatrix}
    \end{align*}
    and 
    \begin{align*}
 T^{-1} = \begin{pmatrix}
            1 & 0 & 0\\
     -\lambda_1\lara{\xi}^{-1} & 1 & 0\\
  \lambda_1\lambda_2\lara{\xi}^{-2}& -(\lambda_1+\lambda_2)\lara{\xi}^{-1} & 1
        \end{pmatrix}.
    \end{align*}
    \item  For the case $m=4$,  
    \begin{align*}
    T = \begin{pmatrix}
	1 & 0 & 0 & 0\\
\lambda_1\lara{\xi}^{-1} & 1 & 0 & 0\\
\lambda_1^2\lara{\xi}^{-2}&  (\lambda_1+\lambda_2)\lara{\xi}^{-1} & 1 &0\\
\lambda_1^3\lara{\xi}^{-3}& (\lambda_1^2+\lambda_2^2 + \lambda_1\lambda_2)\lara{\xi}^{-2} & (\lambda_1 + \lambda_2 +\lambda_3)\lara{\xi}^{-1}  & 1
	\end{pmatrix},
\end{align*}
and 
\begin{align*}
T^{-1} = \begin{pmatrix}
	1 & 0 & 0 & 0 \\
	-\lambda_1 \lara{\xi}^{-1} & 1 & 0 & 0 \\
	\lambda_1\lambda_2 \lara{\xi}^{-2} & -(\lambda_2+ \lambda_1)\lara{\xi}^{-1} & 1& 0 \\
	-\lambda_1\lambda_2\lambda_3\lara{\xi}^{-3} & (\lambda_1\lambda_2 + \lambda_1\lambda_3 + \lambda_2\lambda_3)\lara{\xi}^{-2} & -(\lambda_1 + \lambda_2 + \lambda_3)\lara{\xi}^{-1} & 1
	\end{pmatrix}.
\end{align*}
\end{itemize}
\end{example}
We conclude this section by recalling the $C^\infty$ well-posedness result proven for hyperbolic systems which in general are not diagonalisable but have principal part in upper triangular form in \cite{GarJRuz}. This result will be applied to our system after transformation via the matrix $T$.
\subsection{$C^\infty$ well-posdenss of non-diagonalisable hyperbolic systems }
\begin{theorem}[Garetto-Jäh-Ruzhansky in \cite{GarJRuz}]\label{thm-GJR}
    Let $n\geq 1$, $m\geq 2$, and let
    \begin{align}\label{eq-thm_GJR}
        \begin{cases}
 D_t V = P(t,x,D_x) V + L(t,x,D_x)V + F(t,x),&  (t,x) \in [0,T]\times \mathbb{R}^n, \\
 V(0,x) = V_0(x),& x \in \mathbb{R}^n,
\end{cases}
    \end{align}
    where $P(t,x,D_x) \in (CS^1)^{m\times m} $ is an upper-triangular matrix of pseudo-differential operators of order $1$ and $L(t,x,D_x)\in (CS^0)^{m\times m}$ is a matrix of pseudo-differential operators of order $0$, continuous with respect to $t$. Hence, if 
    \begin{itemize}
        \item[] the lower order terms $\ell_{i,j}$ belong to $C([0,T], \Psi^{j-i})$ for $i>j$,
    \end{itemize}
    $V_k^0 \in H^{s+k-1}(\mathbb R^n)$ and $F_k \in C([0,T], H^{s+k-1})$ for $k=1,\ldots,m$, then \eqref{eq-thm_GJR} has a unique anisotropic Sobolev solution $V$, i.e. $V_k \in C([0,T], H^{s+k-1})$ for $k=1,\ldots,m$. 
\end{theorem}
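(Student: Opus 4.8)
The strategy, following the general method for non-diagonalisable hyperbolic systems, is to conjugate \eqref{eq-thm_GJR} by suitable powers of $\lara{D_x}$ so as to reduce it to a first order system whose order $1$ part is \emph{diagonal} with real symbols and whose remaining part consists of operators of order $0$; for such a system $C([0,T],(L^2)^m)$-well-posedness is classical, and the anisotropic Sobolev spaces in the statement are precisely the pull-backs of $(L^2)^m$ under this conjugation.

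Concretely, set $\Lambda_k=\lara{D_x}^{s+k-1}$ for $k=1,\dots,m$, $\Lambda=\mathrm{diag}(\Lambda_1,\dots,\Lambda_m)$, and $W=\Lambda V$, so that $W_k\in C([0,T],L^2)$ iff $V_k\in C([0,T],H^{s+k-1})$, and $\tilde F:=\Lambda F\in C([0,T],(L^2)^m)$ iff $F_k\in C([0,T],H^{s+k-1})$. Then $W$ solves $D_tW=(\Lambda P\Lambda^{-1})W+(\Lambda L\Lambda^{-1})W+\tilde F$, and the heart of the matter is to check, entry by entry via the symbolic calculus, that $\Lambda P\Lambda^{-1}+\Lambda L\Lambda^{-1}$ has the claimed form. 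Since $P$ is upper triangular, the $(i,j)$-entry of $\Lambda P\Lambda^{-1}$ vanishes for $i>j$; on the diagonal $\Lambda_i p_{ii}(t,x,D_x)\Lambda_i^{-1}=p_{ii}(t,x,D_x)+[\Lambda_i,p_{ii}(t,x,D_x)]\Lambda_i^{-1}$ with the commutator term of order $0$; and for $i<j$ the entry $\Lambda_i p_{ij}(t,x,D_x)\Lambda_j^{-1}$ has order $(s+i-1)+1-(s+j-1)=1+i-j\le 0$. For the lower order part, $\Lambda_i \ell_{ij}(t,x,D_x)\Lambda_j^{-1}$ has order $\mathrm{ord}(\ell_{ij})+i-j$: for $i\le j$ this is $\le i-j\le 0$ because $L\in(CS^0)^{m\times m}$, while for $i>j$ the Levi condition $\ell_{ij}\in C([0,T],\Psi^{j-i})$ makes it exactly $(j-i)+(i-j)=0$. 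Hence all entries are of order $\le 0$ except the diagonal $p_{ii}$, which stays of order $1$ with real symbol (the hyperbolicity hypothesis). Collecting terms,
\[
D_tW=\mathcal D(t,x,D_x)W+\mathcal R(t,x,D_x)W+\tilde F,\qquad \mathcal D=\mathrm{diag}\bigl(p_{11}(t,x,D_x),\dots,p_{mm}(t,x,D_x)\bigr),\ \ \mathcal R\in(CS^0)^{m\times m}.
\]
This is the only place where the Levi conditions enter, and they are tailored exactly so that the conjugation creates no positive order term.

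It then remains to treat the reduced diagonal-plus-bounded system. For the a priori bound one computes, for smooth $W$,
\[
\frac{d}{dt}\|W(t)\|_{(L^2)^m}^2=-2\,\mathrm{Im}\,\langle\mathcal D W,W\rangle-2\,\mathrm{Im}\,\langle\mathcal R W,W\rangle-2\,\mathrm{Im}\,\langle\tilde F,W\rangle .
\]
The first term is $O(\|W\|^2)$ because each $p_{kk}$ is real valued, so $p_{kk}(t,x,D_x)-p_{kk}(t,x,D_x)^*$ is of order $0$ (no symmetriser is needed since $\mathcal D$ is already diagonal); the second is $O(\|W\|^2)$ as $\mathcal R$ is $L^2$-bounded; the third is $\le\|W\|^2+\|\tilde F\|^2$. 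Gronwall's lemma then gives $\|W(t)\|^2\le e^{CT}\bigl(\|W(0)\|^2+\int_0^T\|\tilde F(s)\|^2\,ds\bigr)$, i.e. the anisotropic estimate $\sum_k\|V_k(t)\|_{H^{s+k-1}}^2\le C\bigl(\sum_k\|V_k^0\|_{H^{s+k-1}}^2+\int_0^T\sum_k\|F_k(s)\|_{H^{s+k-1}}^2\,ds\bigr)$, which yields uniqueness. Existence is obtained in the usual way: construct the propagator $\Phi(t,s)$ of the diagonal part by solving $D_t\Phi_k(t,s)=p_{kk}(t,x,D_x)\Phi_k(t,s)$, $\Phi_k(s,s)=\mathrm{Id}$ — a Fourier integral operator bounded on $L^2$ uniformly on $[0,T]$ — and then solve the full reduced system by the Picard iteration associated with $W(t)=\Phi(t,0)W_0+i\int_0^t\Phi(t,s)\bigl(\mathcal R(s)W(s)+\tilde F(s)\bigr)\,ds$, which converges in $C([0,T],(L^2)^m)$ on the finite interval; alternatively, regularise the symbols and pass to the limit using the uniform a priori estimate. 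Undoing the conjugation, $V_k=\lara{D_x}^{-(s+k-1)}W_k$ gives the solution with $V_k\in C([0,T],H^{s+k-1})$.

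I expect the main obstacle to be not the algebra — the role of the Levi conditions in the reduction step is essentially a bookkeeping exercise — but rather the technical care needed to run the symbolic calculus, the self-adjointness correction for the real diagonal symbols, and especially the construction and $L^2$-boundedness of the Fourier integral operators $\Phi_k$, under the low regularity in $t$ permitted by the hypotheses ($C([0,T],\cdot)$ in \eqref{eq-thm_GJR}, and only $C^1$ in $t$ for the roots $\lambda_i$ in the application to \eqref{system_HE}).
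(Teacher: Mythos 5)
Your argument is sound, but note first that the paper does not actually prove Theorem \ref{thm-GJR}: it is quoted verbatim from \cite{GarJRuz}, so there is no in-paper proof to match. Compared with the proof in that reference, your route is genuinely different. In \cite{GarJRuz} the system is solved row by row from the bottom up: each scalar operator $D_t-\lambda_k(t,x,D_x)$ is inverted by its Fourier integral operator propagator $G_k(t,s)$, the superdiagonal entries (order $1$) and the sub-diagonal terms $\ell_{i,j}$ are fed in via Duhamel, and the anisotropic scale $H^{s+k-1}$ together with the Levi conditions $\ell_{i,j}\in \Psi^{j-i}$ is exactly what makes the resulting nested Volterra integral equations map the anisotropic space into itself; this representation is also what the authors need later for the microlocal analysis of \cite{GarJRuz2}. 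Your alternative --- conjugating by $\Lambda=\mathrm{diag}(\lara{D_x}^{s+k-1})$ so that the Levi conditions convert the whole system into a real diagonal order-$1$ part plus an order-$0$ remainder, then running an $L^2$ energy estimate and Gronwall --- is correct and, for the bare well-posedness statement, more elementary: it needs no $t$-derivatives of the symbols, so the low time-regularity you flag as the main obstacle is only an issue for your optional FIO construction of the propagator, and can be avoided entirely by obtaining existence from the a priori estimate for the adjoint system (or by regularisation), as you suggest. Two points you should make explicit if you write this up: the reality of the diagonal symbols $p_{kk}$ is not stated in the theorem as quoted here but is indispensable for $p_{kk}-p_{kk}^{*}$ to have order $0$; and the bookkeeping $\mathrm{ord}(\Lambda_i\,a\,\Lambda_j^{-1})=\mathrm{ord}(a)+i-j$ rests on the composition calculus with remainders uniform in $t$, which holds because all symbols are continuous in $t$ with values in the relevant symbol classes.
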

From Theorem \ref{thm-GJR} it follows immediately that the Cauchy problem is $C^\infty$ well-posed when we choose right-hand side and initial data that are smooth with respect to $x$ and compactly supported. 

\section{$C^\infty$ well-posedness of the Cauchy problem for higher order hyperbolic equations with time-dependent principal part}
This section is devoted to the proof of our main result: the $C^\infty$ well-posedness of the hyperbolic Cauchy problem 
\begin{equation}\label{eq-mHE} 
\begin{cases}
    D_t^mu - \sum_{j=0}^{m-1} A_{m-j}(t,x,D_x)D_t^j u = f(t,x), & t\in [0,T], x \in \mathbb R^n, \\
    D_t^{k-1} u(0,x) = g_k(x), & k=1,2,\ldots,m.
\end{cases}
\end{equation}
where the principal part $A_{(m-j)}$ of the operators $A_{m-j}$ is $x$-independent. We assume that the characteristic polynomial
 \begin{equation}\label{eq-HCP} 
     \tau^m-\sum_{j=0}^{m-1} A_{(m-j)}(t,\xi)\tau^j=\prod_{j=1}^m (\tau-\lambda_j(t,\xi)),
 \end{equation}
has real-valued roots $\lambda_1, \ldots, \lambda_m \in C^1([0,T],S^1(\mathbb R^{2n}))$.  By Section 2, we will equivalently study the first order system  
\begin{equation}\label{eq-mHS} 
    \begin{cases}
        D_t U = A(t,D_x) U + B(t,x,D_x)U + F(t,x), & (t,x)\in [0,T]\times \mathbb R^n,\\
        U|_{t=0} = U_0, & x \in \mathbb R^n,
    \end{cases}
\end{equation}
where $U_0 = (u_1(0), u_2(0), \ldots, u_m(0))^T$, $F(t,x)= (0, 0, \ldots, f(t,x))^T$, with the operators $A(t,D_x)$ and $B(t,x,D_x)$ given by 
\begin{align*}
A(t,D_x) =& \begin{pmatrix}
0 & \lara{D_x} & 0& \dots & 0 &0\\
0 & 0 & \lara{D_x}& \dots & 0 & 0\\
0 & 0 & 0 & \ddots & 0 & 0 \\
\vdots & \vdots & \vdots & \dots & \vdots & \vdots \\
0 & 0 & 0 &\dots & 0 &\lara{D_x} \\
b_{(1)} & b_{(2)} & b_{(3)}& \dots &b_{(m-1)} & b_{(m)}
\end{pmatrix}
\end{align*}
and
\begin{align*}
    B(t,x,D_x) = \begin{pmatrix}
0 & 0 &  \dots & 0 &0\\
0 & 0 &  \dots & 0 & 0\\
0 & 0 &  \ddots & 0 & 0 \\
\vdots & \vdots & \vdots & \vdots & \vdots  \\
0 & 0 & \dots & 0 & 0  \\
b_1 - b_{(1)} & b_2 - b_{(2)} &  \dots & b_{m-1} - b_{(m-1)} & b_m - b_{(m)}
\end{pmatrix}.
\end{align*}
The entries of $A(t,D_x)$ and $B(t,x,D_x)$ are given by  
\begin{align*}
    b_j = A_{m-j+1}(t,D_x)\lara{D_x}^{j-m} \quad & \text{ and } \quad  b_{(j)}=A_{(m-j+1)}(t,D_x)\lara{D_x}^{j-m}
\end{align*}
for $j=1,2,\dots,m$

\subsection{Transformation to upper-triangular form} We now employ Proposition \ref{Prop_Schur_decomposition} to transform the system \eqref{eq-mHE} into upper-triangular form, i.e., we set $U=TV$.  This is a significant step as it allows us to apply Theorem \ref{thm-GJR} and to find the conditions which guarantee anisotropic Sobolev well-posedness. In detail, we can write  
\begin{align*}
D_tV=T^{-1}ATV+T^{-1}BTV-T^{-1}D_tT V + T^{-1}F,
\end{align*}
and 
$V|_{t=0}=T^{-1}|_{t=0}U_0=V_0$. Passing now at the symbol level, we have that the principal operator $(T^{-1}AT)(t,D_x)$ has symbol
\begin{align*}
	P = T^{-1} A T = \begin{pmatrix}
	\lambda_1 & \lara{\xi} & 0 & \cdots & 0 & 0 \\
	0 & \lambda_2 & \lara{\xi} &  \cdots & 0 & 0 \\
	0 & 0 & \lambda_3 & \cdots & 0 & 0 \\
	\vdots & \vdots & \vdots & \ddots & \vdots & \vdots \\
	0 & 0 & 0 & \cdots & \lambda_{m-1} & \lara{\xi} \\
	0 & 0 & 0 & \cdots & 0 & \lambda_{m}
	\end{pmatrix},
\end{align*}
in upper-triangular form. The zero-order operator $T^{-1}BT-T^{-1}D_tT$ can be written as $BT-T^{-1}D_tT$ since $T^{-1}B=B$. It follows that we do not create any remainder terms in this composition of these operators and that $T^{-1}BT-T^{-1}D_tT$ has symbol matrix
\[
D(t,x,D_x)-E(t,D_x)
\]
where  
\begin{align*}
	D= T^{-1}BT = \begin{pmatrix}
	0 & 0 & 0 & \cdots & 0 & 0 \\
	0 & 0 & 0 & \cdots & 0 & 0 \\
	\vdots & \vdots & \vdots & \ddots & \vdots & \vdots \\
	0 & 0 & 0 & \cdots & 0 & 0 \\
	d_{m,1} & d_{m,2} & d_{m,3} & \cdots & d_{m, m-1} & d_{m,m} \\
	\end{pmatrix},	
\end{align*}
\begin{align*}
E =	T^{-1}D_tT = \begin{pmatrix}
	0 & 0 & 0 & \ldots & 0 & 0 \\
 	e_{2,1} & 0 & 0 & \ldots & 0 & 0 \\
	e_{3,1} & e_{3,2} & 0 & \ldots & 0 & 0 \\
	\vdots & \vdots & \vdots & \ddots & \vdots & \vdots \\
	e_{m-1,1} & 	e_{m-1,2} & 	e_{m-1,3} & \ldots & 0 & 0 \\
	e_{m,1} & 	e_{m,2} & 	e_{m,3} & \ldots & 	e_{m,m-1} & 0
	\end{pmatrix}.
\end{align*}
\begin{align}\label{eq-tranformed_systemm}
    D_t V = P(t,D_x) V + L(t,x,D_x) V + T^{-1}F,
\end{align}
where $P(t,D_x) = T^{-1}AT$ and $L(t,x,D_x) =( D-E)(t,x,D_x)$. By direct computations we obtain the following auxiliary result.
\begin{proposition}
The entries of the matrices $D$ and $E$ are given by 
\begin{align*}
	d_{m,k} = \sum_{j=k}^{m} (b_j - b_{(j)}) \omega_{j,k} 
\end{align*}
for $k=1,\ldots,m$ and 
\begin{align*}
	e_{i,j} = \sum_{j < k \leq i} (T^{-1})_{i,k}, D_t\omega_{k,j}, 
\end{align*}
for $i>j$, where
\begin{align*}
    \omega_{k,j} = \sum_{\alpha_1 + \alpha_2 + \cdots + \alpha_j=k-j} \lambda_1^{\alpha_1}\lambda_2^{\alpha_2}\cdots \lambda_j^{\alpha_j}\lara{\xi}^{j-k}.
\end{align*}
\end{proposition}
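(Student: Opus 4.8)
The plan is to prove the two formulas by a direct computation at the level of the symbol matrices, exploiting the triangular structure of $T$, $T^{-1}$ and $B$ established in Proposition \ref{Prop_Schur_decomposition}; since $T=T(t,\xi)$ is $x$-independent and all matrices involved ($T$, $T^{-1}$, $B$, $D_tT$) have entries that are symbols of order $0$, the products $T^{-1}BT$ and $T^{-1}D_tT$ are exact symbol products, with no lower order remainders, exactly as noted in the paragraph preceding the statement.

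First I would treat $D=T^{-1}BT$. Recall that $B$ has all rows zero except the $m$-th, with $(B)_{m,j}=b_j-b_{(j)}$, and that $T^{-1}$ is lower triangular with $1$'s on the diagonal. Hence $(T^{-1}B)_{i,j}=(T^{-1})_{i,m}(B)_{m,j}$, which equals $(B)_{m,j}$ when $i=m$ and vanishes when $i<m$ (since $(T^{-1})_{i,m}=0$ for $i<m$); that is, $T^{-1}B=B$. Therefore $D=BT$, which again has all rows zero except the $m$-th, and
\[
d_{m,k}=(BT)_{m,k}=\sum_{j=1}^m(b_j-b_{(j)})\,T_{j,k}=\sum_{j=1}^m(b_j-b_{(j)})\,\omega_{j,k}.
\]
Because $T$ is lower triangular, $\omega_{j,k}=0$ for $j<k$, so the sum starts at $j=k$, which is the claimed formula; all other entries of $D$ vanish, matching the displayed form of the matrix $D$.

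Next I would treat $E=T^{-1}D_tT$. Since $T$ depends only on $(t,\xi)$, the symbol matrix $D_tT$ has entries $(D_tT)_{k,j}=D_t\omega_{k,j}$; as the diagonal entries $\omega_{j,j}=1$ are constant in $t$ and $\omega_{k,j}=0$ for $k<j$, the matrix $D_tT$ is strictly lower triangular. Multiplying on the left by $T^{-1}$ (lower triangular) gives
\[
e_{i,j}=(T^{-1}D_tT)_{i,j}=\sum_{k}(T^{-1})_{i,k}\,D_t\omega_{k,j},
\]
where the summand is nonzero only when $k\le i$ (lower triangularity of $T^{-1}$) and $k>j$ (strict lower triangularity of $D_tT$); hence the sum runs over $j<k\le i$, giving the stated formula, and $e_{i,j}=0$ whenever $i\le j$, in agreement with the displayed form of $E$.

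There is no real obstacle here: the whole statement is a bookkeeping consequence of the triangular shapes of $T$, $T^{-1}$ and $B$ together with the explicit entries $\omega_{k,j}$ computed in Proposition \ref{Prop_Schur_decomposition}. The only points requiring a little care are the index ranges in the two sums (which come precisely from where the various matrices vanish) and the observation, already made before the statement, that since $T$ is independent of $x$ and all the matrices in play are of order $0$, these are genuine matrix products of symbols and produce no remainder terms.
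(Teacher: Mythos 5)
Your proof is correct and follows essentially the same route as the paper: the paper also observes that $T^{-1}B=B$ so that $D=BT$ has only its $m$-th row nonzero, and reads off $e_{i,j}$ from the triangular shapes of $T^{-1}$ and $D_tT$. Your write-up actually supplies more of the bookkeeping details (e.g.\ why the sums truncate to $j\ge k$ and $j<k\le i$) than the paper's own very terse proof.
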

\begin{proof}
By definition of the matrices $T$, $T^{-1}$ and $B$, we immediately see that $D=T^{-1}BT=BT$ and the only non-zero row is the $m$-th row. Hence, 
\begin{align*}
	d_{m,k} = \sum_{j=k}^{m} (b_j - b_{(j)}) \omega_{j,k} =b_k-b_{(k)}+\sum_{j=k+1}^{m} (b_j - b_{(j)}) \omega_{j,k},
\end{align*}
for $k=1,\ldots,m$. By definition of $T^{-1}$ and $T$ we have that $E$ is a strictly lower triangular matrix. From Proposition \ref{Prop_Schur_decomposition}, we have that 
\[
e_{i,j} = \sum_{j < k \leq i} (T^{-1})_{i,k}, D_t\omega_{k,j}, 
\]
for $i>j$, where
\begin{align*}
   \omega_{k,j} = \sum_{\alpha_1 + \alpha_2 + \cdots + \alpha_j=k-j} \lambda_1^{\alpha_1}\lambda_2^{\alpha_2}\cdots \lambda_j^{\alpha_j}\lara{\xi}^{j-k}.
\end{align*}
\end{proof}
We now apply Theorem \ref{thm-GJR}, to the system \eqref{eq-tranformed_systemm} with initial condition $V_0$. We immediately obtain anisotropic Sobolev well-posedness of any order and therefore $C^\infty$ well-posedness if the $(i,j)$-entry of the matrix $L$ is given by pseudodifferential operators of order $j-i$ below the diagonal, i.e., when $i>j$. Since $L(t,x,D_x) =(D-E)(t,x,D_x)$ this means to request
\begin{equation}
\label{LC-1}
\begin{split}
e_{i,j} &\in C([0,T],S^{j-i}(\mathbb{R}^{2n})), \\
d_{m,k} - e_{m,k}&\in C([0,T],S^{k-m}(\mathbb{R}^{2n})),	
\end{split}
\end{equation}
for $i>j$, $2\le i\le m-1$, and for all $k=1,\ldots,m-1$. 

We have therefore proven the following theorem.
\begin{theorem}
 \label{main_theo}
     Let $n\geq 1$ and $m\geq 2$ and consider the Cauchy problem 
     \begin{align*}
   \begin{cases}
      D_t^mu-\sum_{j=0}^{m-1}A_{m-j}(t,x,D_x)D_t^ju=f(t,x), & t\in[0,T],\, x\in\R^{n},  \\
      D_t^{k-1}u(0,x) = g_k(x), & k=1,\ldots,m,
   \end{cases}  
\end{align*}
where $A_{m-j}$ is a pseudo-differential operator of order $m-j$ with principal part $A_{(m-j)}$ independent of $x$ and $f(t,x)$ is in $C([0,T],C^{\infty}(\mathbb R^n))$. Assume that the characteristic polynomial associated to this equation has $m$ real roots $\lambda_i(t,\xi)\in C^1([0,T], S^1(\R^{2n}))$. If the Levi conditions 
\[
\begin{split}
e_{i,j} &\in C([0,T],S^{j-i}(\mathbb{R}^{2n})), \\
d_{m,k} - e_{m,k}&\in C([0,T],S^{k-m}(\mathbb{R}^{2n})),	
\end{split}
\]
for $i>j$, $2\le i\le m-1$, and for all $k=1,\ldots,m-1$, then the Cauchy problem is $C^\infty$ well-posed.
\end{theorem}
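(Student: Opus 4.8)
The plan is to follow the reduction already prepared in Sections~2 and~3 and then invoke Theorem~\ref{thm-GJR}. First I would pass from the $m$-th order scalar equation \eqref{eq-mHE} to the first order system \eqref{eq-mHS} by means of the substitution $u_k=D_t^{k-1}\langle D_x\rangle^{m-k}u$, $k=1,\dots,m$, as in \eqref{system_HE}. This is an equivalence of Cauchy problems: a solution $u$ of \eqref{eq-mHE} produces $U=(u_1,\dots,u_m)^T$ solving \eqref{eq-mHS} and conversely, the matrix $A(t,D_x)$ is in Sylvester form with the characteristic roots $\lambda_i$ as eigenvalues (because the principal part $A_{(m-j)}$ is $x$-independent, $A$ is an $x$-independent Fourier multiplier matrix), the $x$-dependent perturbation $B(t,x,D_x)$ is supported entirely on the last row, and $F=(0,\dots,0,f)^T$, $U_0=(\langle D_x\rangle^{m-1}g_1,\dots,g_m)^T$.

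Next I would apply the Schur decomposition of Proposition~\ref{Prop_Schur_decomposition} and set $U=TV$, obtaining $D_tV=T^{-1}ATV+(T^{-1}BT-T^{-1}D_tT)V+T^{-1}F$. The key observation is that $A$, $T$ and $T^{-1}$ are all $x$-independent (the entries of $T,T^{-1}$ being polynomials in the $\lambda_i$ and powers of $\langle\xi\rangle$), so $T^{-1}AT$ is realized exactly at the symbol level, giving the upper triangular principal symbol $P=J$ with the $\lambda_i$ on the diagonal and $\langle\xi\rangle$ on the superdiagonal. Moreover the block structure of $B$ forces $T^{-1}B=B$ as operators, since $T^{-1}$ is lower triangular with $(T^{-1})_{mm}=1$ and $B$ has nonzero entries only in its $m$-th row; hence $T^{-1}BT=BT$, and $T$ being an $x$-independent Fourier multiplier, this composition is exact as well. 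Therefore \emph{no remainder terms of any order are produced}, and the zero order part of the transformed system \eqref{eq-tranformed_systemm} is exactly $L(t,x,D_x)=(D-E)(t,x,D_x)$ with $D=BT$, $E=T^{-1}D_tT$ as computed in the Proposition following \eqref{eq-tranformed_systemm}. The time regularity is also under control: the $C^1$ dependence of the roots makes $T,T^{-1}\in C^1$ in $t$, whence $E=T^{-1}D_tT$ and $D=BT$ are continuous in $t$ and $L\in (CS^0)^{m\times m}$, while $P=J\in (CS^1)^{m\times m}$.

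With \eqref{eq-tranformed_systemm} in hand, the Levi conditions in the statement are precisely the hypothesis of Theorem~\ref{thm-GJR}, namely $\ell_{i,j}=(D-E)_{i,j}\in C([0,T],\Psi^{j-i})$ for $i>j$: since $D$ has only its $m$-th row nonzero, for $2\le i\le m-1$ and $j<i$ this reads $e_{i,j}\in C([0,T],S^{j-i})$, and for the last row $d_{m,k}-e_{m,k}\in C([0,T],S^{k-m})$ for $k=1,\dots,m-1$ (the case $k=m$ being the diagonal, not a lower order constraint); these are exactly the conditions \eqref{LC-1}. Theorem~\ref{thm-GJR} then yields, for every $s$, a unique $V$ solving \eqref{eq-tranformed_systemm} with $V_k\in C([0,T],H^{s+k-1})$, once we check that $V_0=T^{-1}|_{t=0}U_0$ and $T^{-1}F$ have the corresponding anisotropic regularity — which holds because $g_k$ and $f$ are smooth in $x$ and $T^{-1}$ is of order $0$. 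Finally I would transform back: $U=TV$ solves \eqref{eq-mHS}, and since the first row of $T$ is $(1,0,\dots,0)$ one recovers $u=\langle D_x\rangle^{1-m}V_1\in C([0,T],H^{s+m-1})$ for every $s$, hence smooth in $x$; the initial conditions $D_t^{k-1}u(0,x)=g_k(x)$ follow from $V_0=T^{-1}|_{t=0}U_0$ together with $u_k=D_t^{k-1}\langle D_x\rangle^{m-k}u$. This establishes $C^\infty$ well-posedness of \eqref{eq-mHE}.

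I expect the only genuine difficulty — and the reason the principal part is assumed $x$-independent — to be exactly the remainder-free passage to upper triangular form: if $A$ depended on $x$, the Schur matrix $T$ would depend on $x$ and the conjugation $T^{-1}(t,x,D_x)A(t,x,D_x)T(t,x,D_x)$ would produce, by the symbolic calculus, a zero order remainder matrix (cf.\ Remark~\ref{remark_ut}) that would have to be absorbed into the lower order terms, altering the shape of the Levi conditions; in the present setting that difficulty disappears. The remaining work — the explicit evaluation of the entries $d_{m,k}$ and $e_{i,j}$ and the anisotropic Sobolev bookkeeping for $V_0$ and $T^{-1}F$ — is routine and is already carried out in the Propositions preceding the statement.
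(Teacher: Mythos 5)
Your proposal is correct and follows essentially the same route as the paper: reduction to the first order system \eqref{eq-mHS}, conjugation by the $x$-independent Schur matrix $T$ of Proposition \ref{Prop_Schur_decomposition} (with the observations $T^{-1}B=B$ and exactness of the compositions, so no remainders appear), and application of Theorem \ref{thm-GJR} once the Levi conditions are identified with the order requirements on the below-diagonal entries of $L=D-E$. The extra bookkeeping you supply on the regularity of $V_0$, $T^{-1}F$ and the back-transformation $U=TV$ is consistent with, and slightly more explicit than, what the paper records.
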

A careful analysis of the Levi condition on $e_{i,j}$ leads us to other two immediate corollaries of Theorem \ref{main_theo}. It suffices to notice that the terms $D_j\lambda_h$, $h=1,\cdots, j$, appear in the definition of $e_{i,j}$ since
\[
e_{i,j} = \sum_{j < k \leq i} (T^{-1})_{i,k}, D_t\omega_{k,j}=\sum_{j < k \leq i} (T^{-1})_{i,k} D_t\biggl(\sum_{\alpha_1 + \alpha_2 + \cdots + \alpha_j=k-j} \lambda_1^{\alpha_1}\lambda_2^{\alpha_2}\cdots \lambda_j^{\alpha_j}\lara{\xi}^{j-k}\biggr),
\]
for $i>j$. Hence, if the roots $\lambda_1, \lambda_2,\cdots,\lambda_j$ are constant (in time) than the entries $e_{i,j}$ vanish and trivially fulfil our Levi conditions. In addition,
\[
e_{m,m-1}=(T^{-1})_{m,m}D_t\omega_{m,m-1}=D_t\omega_{m,m-1}=\sum_{h=1}^{m-1}D_t\lambda_h\lara{\xi}=D_t\lambda_{m-1}\lara{\xi}.
\]


\begin{corollary}
\label{cor_roots}
If the the roots $\lambda_j$ are constant for $j=1,\cdots, m-2$ and 
\[
\begin{split}
d_{m,k}&\in C([0,T],S^{k-m}(\mathbb{R}^{2n})),\\
d_{m,m-1}-D_t\lambda_{m-1}\lara{\xi}&\in C([0,T],S^{-1}(\mathbb{R}^{2n})),
\end{split}
\]
for $k=1,\cdots,m-2$, then the Cauchy problem is $C^\infty$ well-posed.
\end{corollary}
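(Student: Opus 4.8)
\textbf{Proof plan for Corollary \ref{cor_roots}.}
The plan is to derive Corollary \ref{cor_roots} directly from Theorem \ref{main_theo} by checking that the abstract Levi conditions \eqref{LC-1} reduce to the concrete hypotheses stated in the corollary once $\lambda_1,\ldots,\lambda_{m-2}$ are assumed to be constant in $t$. The work therefore splits into two parts: controlling the entries $e_{i,j}$ of $E=T^{-1}D_tT$, and rewriting the condition on $d_{m,k}-e_{m,k}$.

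First I would handle the strictly-lower-triangular matrix $E$. Recall that $e_{i,j}=\sum_{j<k\le i}(T^{-1})_{i,k}D_t\omega_{k,j}$ and that $\omega_{k,j}=\sum_{\alpha_1+\cdots+\alpha_j=k-j}\lambda_1^{\alpha_1}\cdots\lambda_j^{\alpha_j}\lara{\xi}^{j-k}$ is a polynomial in $\lambda_1,\ldots,\lambda_j$ only. Hence $D_t\omega_{k,j}$ is a linear combination of terms $D_t\lambda_h$ with $1\le h\le j$, each multiplied by a monomial in the roots and an appropriate power of $\lara{\xi}$. For $i>j$ with $2\le i\le m-1$ we necessarily have $j\le m-2$, so every root $\lambda_h$ appearing ($h\le j\le m-2$) is constant by assumption; therefore $D_t\omega_{k,j}=0$ for all such $(k,j)$, and consequently $e_{i,j}=0$ for $2\le i\le m-1$, $i>j$. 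The first family of conditions in \eqref{LC-1} is thus trivially satisfied. It remains to treat the $m$-th row of $E$, where the relevant entries are $e_{m,k}$ for $k=1,\ldots,m-1$. For $k\le m-2$ the same argument gives $e_{m,k}=0$ (only roots $\lambda_1,\ldots,\lambda_k$, all constant, enter $\omega_{j,k}$), so the condition $d_{m,k}-e_{m,k}\in C([0,T],S^{k-m})$ becomes simply $d_{m,k}\in C([0,T],S^{k-m})$, which is exactly the first hypothesis of the corollary. For $k=m-1$ I would use the identity already recorded in the excerpt, namely $e_{m,m-1}=(T^{-1})_{m,m}D_t\omega_{m,m-1}=D_t\omega_{m,m-1}=\sum_{h=1}^{m-1}D_t\lambda_h\lara{\xi}=D_t\lambda_{m-1}\lara{\xi}$, where the last step uses that $\lambda_1,\ldots,\lambda_{m-2}$ are constant. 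Substituting, the condition $d_{m,m-1}-e_{m,m-1}\in C([0,T],S^{-1})$ becomes $d_{m,m-1}-D_t\lambda_{m-1}\lara{\xi}\in C([0,T],S^{-1})$, which is the second hypothesis of the corollary.

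With both families of conditions in \eqref{LC-1} verified, Theorem \ref{main_theo} applies and yields $C^\infty$ well-posedness, completing the proof. There is essentially no analytic obstacle here: the corollary is a bookkeeping specialisation of the main theorem. The only point requiring slight care is the observation that for every index pair $(i,j)$ relevant to the first condition in \eqref{LC-1} one has $j\le m-2$, so that the constancy hypothesis on $\lambda_1,\ldots,\lambda_{m-2}$ is exactly strong enough to kill all the $e_{i,j}$ with $i\le m-1$ as well as $e_{m,k}$ for $k\le m-2$, while leaving the single genuinely nonzero entry $e_{m,m-1}$, which is then computed explicitly.
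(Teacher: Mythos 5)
Your argument is correct and is essentially the same as the paper's: the constancy of $\lambda_1,\ldots,\lambda_{m-2}$ forces $D_t\omega_{k,j}=0$ whenever $j\le m-2$, which kills every $e_{i,j}$ appearing in the first condition of \eqref{LC-1} as well as $e_{m,k}$ for $k\le m-2$, leaving only $e_{m,m-1}=D_t\omega_{m,m-1}$ and thus reducing \eqref{LC-1} to the two hypotheses of the corollary. (Both you and the paper record $e_{m,m-1}$ as $D_t\lambda_{m-1}\lara{\xi}$, whereas $\omega_{m,m-1}=(\lambda_1+\cdots+\lambda_{m-1})\lara{\xi}^{-1}$ gives the power $\lara{\xi}^{-1}$, as the worked third-order case confirms; this is a shared typographical slip, not a gap in your reasoning.)
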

To illustrate our result we discuss in detail the $m=3$ toy model.

\subsection{Third-Order Hyperbolic Equations with $(t,x)$-dependent lower order terms}

\leavevmode

Let us consider the Cauchy problem
\begin{equation}\label{eq-3HE} 
\begin{cases}
    D_t^3u - \sum_{j=0}^{2} A_{3-j}(t,x,D_x)D_t^j u = f(t,x), & t\in [0,T], x \in \mathbb R^n, \\
    D_t^{k-1} u(0,x) = g_k(x), & k=1,2,3.
\end{cases}
\end{equation}
where the principal part $A_{(3-j)}$ of the operators $A_{3-j}$ is $x$-independent. Moreover, the characteristic polynomial
 \begin{equation}\label{eq-3CP}
     \tau^3-\sum_{j=0}^{2} A_{(3-j)}(t,\xi)\tau^j=\prod_{j=1}^3(\tau-\lambda_j(t,\xi)),
 \end{equation}
has the real-valued roots $\lambda_1, \lambda_2, \lambda_3 \in C^1([0,T],S^1(\mathbb R^{2n}))$. 
 
By setting $U=(u_1, u_2, u_3)^T$ with $u_k=D_t^{k-1}\lara{D_x}^{3-k}u$ for $k=1,\cdots,3$ the equation above is reduced to  \begin{equation}\label{eq-3HS} 
    \begin{cases}
        D_t U = A(t,D_x) U + B(t,x,D_x)U + F(t,x), & (t,x)\in [0,T]\times \mathbb R^n,\\
        U|_{t=0} = U_0, & x \in \mathbb R^n,
    \end{cases}
\end{equation}
where $U_0 = (u_1(0), u_2(0), u_3(0))^T$, $F(t,x)= (0, 0, f(t,x))^T$, with the operators $A(t,D_x)$ and $B(t,x,D_x)$ given by 
\begin{equation*}
    A(t,D_x) = \begin{pmatrix}
        0 & \lara{D_x} & 0 \\
        0 & 0 & \lara{D_x} \\
        b_{(1)} & b_{(2)} & b_{(3)}
    \end{pmatrix}
\end{equation*}
and
\begin{equation*}
    B(t,x,D_x) =  \begin{pmatrix}
        0 & 0 & 0\\
        0 & 0 & 0\\
        b_1 - b_{(1)} & b_2- b_{(2)}  & b_3 - b_{(3)}
    \end{pmatrix}. 
\end{equation*}
The entries of $A(t,D_x)$ and $B(t,x,D_x)$ are given by  
\begin{align*}
    b_1 = A_3(t,x,D_x) \lara{D_x}^{-2} \quad & \text{ and } \quad  b_{(1)} = A_{(3)}(t,D_x) \lara{D_x}^{-2},\\
    b_2 = A_2(t,x,D_x)\lara{D_x}^{-1}\quad & \text{ and } \quad b_{(2)} = A_{(2)}(t,D_x)\lara{D_x}^{-1}, \\
    b_3 = A_1(t,x,D_x)\lara{D_x}^{0}\quad & \text{ and } \quad b_{(3)} = A_{(1)}(t,D_x)\lara{D_x}^{0}.
\end{align*}
By implementing the transformation $U=TV$ on our system with  
\begin{equation*}
    T = \begin{pmatrix}
        1 & 0 & 0 \\
        \lambda_1\lara{\xi}^{-1} & 1 & 0 \\
        \lambda_1^2\lara{\xi}^{-2} & (\lambda_1 + \lambda_2)\lara{\xi}^{-1} & 1
    \end{pmatrix} \,\,\, \text{ and } \,\, T^{-1} = \begin{pmatrix}
        1 & 0 & 0\\
     -\lambda_1\lara{\xi}^{-1} & 1 & 0\\
  \lambda_1\lambda_2\lara{\xi}^{-2}& -(\lambda_1+\lambda_2)\lara{\xi}^{-1} & 1
    \end{pmatrix},
\end{equation*}
we are led to
 \[
 D_tV=T^{-1}ATV+T^{-1}BTV-T^{-1}(D_tT)V+F.
 \]
Note that $(T^{-1} A T)(t,D_x)=P(t,D_x)$ where 
 \begin{align*}
    P = T^{-1} A T = \begin{pmatrix}
        \lambda_1(t,\xi) & \lara{\xi} & 0 \\
        0 & \lambda_2(t,\xi) & \lara{\xi} \\
        0& 0&  \lambda_3(t,\xi)
    \end{pmatrix},
\end{align*}
and 
\[
(T^{-1}BT)(t, x,D_x)-T^{-1}(D_tT)(t,D_x)=L(t,x,D_x)=D(t,x,D_x)-E(t,D_x),
\]
where
 \begin{align*}
  D(t,x,\xi) =   T^{-1}BT(t,x,\xi)
    = \begin{pmatrix}
        0 & 0 & 0\\
         0 & 0 & 0\\
        d_{3,1} & d_{3,2} & d_{3,3},
    \end{pmatrix},
 \end{align*}
 with 
 \[
 \begin{split}
 d_{3,1} &=  b_1-b_{(1)}+(b_2-b_{(2)})\lambda_1\lara{\xi}^{-1}+(b_3-b_{(3)})\lambda_1^2\lara{\xi}^{-2},\\
 d_{3,2} &= b_2-b_{(2)}+(b_3-b_{(3)})(\lambda_1+\lambda_2)\lara{\xi}^{-1},\\
 d_{3,3}&=b_3-b_{(3)},
 \end{split}
 \] 
 and
 \[
 \begin{split}
 E(t,\xi) &=    (T^{-1}D_tT)(t,\xi)\\
 & =
   \begin{pmatrix}
        0 & 0 & 0\\
     D_t\lambda_1\lara{\xi}^{-1} & 0 & 0\\
    -D_t\lambda_1\lara{\xi}^{-2}(\lambda_1+\lambda_2)+2\lambda_1D_t\lambda_1\lara{\xi}^{-2} & D_t(\lambda_1+\lambda_2)\lara{\xi}^{-1} & 0
    \end{pmatrix}.
 \end{split}
 \]
The Levi conditions \eqref{LC-1} formulated in Theorem \ref{main_theo} are given by
\[
\begin{split}
e_{2,1} &\in C([0,T],S^{-1}(\mathbb{R}^{2n})), \\
d_{3,2} - e_{3,2}&\in C([0,T],S^{-1}(\mathbb{R}^{2n})),\\
d_{3,1} - e_{3,1}&\in C([0,T],S^{-2}(\mathbb{R}^{2n})).	
\end{split}
\]
This means
\[
\begin{split}
e_{2,1}&=D_t\lambda_1\lara{\xi}^{-1} \in C([0,T],S^{-1}(\mathbb{R}^{2n})),\\
d_{3,2}-e_{3,2}&=  b_2-b_{(2)}+(b_3-b_{(3)})(\lambda_1+\lambda_2)\lara{\xi}^{-1}-D_t(\lambda_1+\lambda_2)\lara{\xi}^{-1} \in C([0,T],S^{-1}(\mathbb{R}^{2n})),\\
d_{3,1}-e_{3,1}&=b_1-b_{(1)}+(b_2-b_{(2)})\lambda_1\lara{\xi}^{-1}+(b_3-b_{(3)})\lambda_1^2\lara{\xi}^{-2}+D_t\lambda_1\lara{\xi}^{-2}(\lambda_1+\lambda_2)\\
&-2\lambda_1D_t\lambda_1\lara{\xi}^{-2}\in C([0,T],S^{-2}(\mathbb{R}^{2n})).
\end{split}
\]
\begin{remark}
Since the roots $\lambda_i$ are symbols of order $1$, the Levi conditions above imply that $\lambda_1$ is constant. The equation can have multiplicity up to order $3$ as illustrated by the following example.  \end{remark}

\begin{example}
\label{ex_3_3}
Let us work in $\R$ and set
	\[
	\begin{split}
	\lambda_1 & = \xi,\\
	\lambda_2 & = a(t)\xi,\\
	\lambda_3 & = b(t)\xi.
	\end{split}
	\]
	The corresponding characteristic polynomial is
	\[
	(\tau-\xi)(\tau - b(t)\xi)(\tau-a(t)\xi)= \tau^3 - (a(t)+b(t)+1)\xi\tau^2 + (a(t)b(t)+a(t)+b(t))\xi^2 \tau - a(t)b(t)\xi^3.
	\]
	Hence, we are considering the operator
	\[
	D_t^3-(a(t)+b(t)+1)D_xD_t^2+ (a(t)b(t)+a(t)+b(t))D_x^2D_t+a(t)b(t)D_x^3
	\]
	with lower order terms
	\[
	a_{0,2}(t,x)D_x^2 + a_{1,1}(t,x)D_xD_t+a_{2,0}(t,x)D_t^2 + a_{0,1}(t,x)D_x + a_{1,0}(t,x)D_t+ a_{0,0}(t,x).
	\]
	Hence,
	\[
	\begin{split}
	b_1-b_{(1)} & =(a_{0,2}(t,x)\xi^2 + a_{0,1}(t,x)\xi + a_{0,0}(t,x))\lara{\xi}^{-2},\\
	b_2 - b_{(2)} & =(a_{1,1}(t,x)\xi + a_{1,0}(t),x)\lara{\xi}^{-1},\\
	b_3-b_{(3)} & = a_{2,0}(t,x).
	\end{split}
	\]
	The first Levi condition 
	\[
d_{32} - D_t\lambda_2\lara{\xi}^{-1}\in C([0,T], S^{-1}(\R^{2n})),
	\]
	where $	d_{32}= b_2-b_{(2)}+(b_3-b_{(3)})(\lambda_1+\lambda_2)\lara{\xi}^{-1}$. Then we have
	\begin{align*}
	d_{32} - D_t\lambda_2\lara{\xi}^{-1}	&= b_2-b_{(2)}+(b_3-b_{(3)})(\lambda_1(t,\xi)+\lambda_2(t,\xi))\lara{\xi}^{-1} - D_t\lambda_2\lara{\xi}^{-1} \\
		&= (a_{1,1}(t,x)\xi + a_{1,0}(t))\lara{\xi}^{-1} + a_{2,0}(t,x)(\xi +a(t)\xi)\lara{\xi}^{-1} -  D_ta(t)\xi\lara{\xi}^{-1} \\
		&=( (a_{1,1}(t,x) + (1+a(t)a_{2,0}(t,x) - D_ta(t))\xi + a_{1,0}(t,x))\lara{\xi}^{-1} \in C([0,T], S^{-1}(\R^{2n})).
	\end{align*}
This can be obtained by setting 
	\begin{equation*}
		a_{1,1}(t,x) + (1+a(t))a_{2,0}(t,x) - D_ta(t) = 0.
	\end{equation*}
The second Levi condition is written as 
		\[
	\begin{split}
	d_{31} &=b_1-b_{(1)}+(b_2-b_{(2)})\lambda_1\lara{\xi}^{-1}+(b_3-b_{(3)})\lambda_1^2\lara{\xi}^{-2}\\
	&=b_1 - b_{(1)} + (a_{1,1}(t)\xi + a_{1,0}(t))\xi\lara{\xi}^{-2} + a_{2,0}(t)\xi^2\lara{\xi}^{-2}\\
	&\in  C([0,T], S^{-2}(\R^{2n})), 
	\end{split}
	\]
	i.e., 
	\[
	\begin{split}
	&b_1- b_{(1)}+(b_2-b_{(2)})\xi\lara{\xi}^{-1}+(b_3-b_{(3)})\xi^2\lara{\xi}^{-2}\\
	&=(a_{0,2}(t,x)\xi^2 + a_{0,1}(t,x)\xi + a_{0,0}(t,x))\lara{\xi}^{-2}+(a_{1,1}(t,x)\xi + a_{1,0}(t,x))\xi\lara{\xi}^{-2} + a_{2,0}(t,x)\xi^2\lara{\xi}^{-2}\\
	&=(a_{0,2}(t,x) + a_{1,1}(t,x) + a_{2,0}(t,x))\xi^2\lara{\xi}^{-2} + (a_{0,1}(t,x) + a_{1,0}(t,x))\xi\lara{\xi}^{-2} + a_{0,0}(t,x)\lara{\xi}^{-2}\\
	&\in C([0,T], S^{-2}(\R^{2n})).
	\end{split}
	\]
	This can be easily obtained by setting
	\[
	\begin{split}
	a_{0,2}(t,x) + a_{1,1}(t,x) + a_{2,0}(t),x&=0,\\
	a_{0,1}(t,x) + a_{1,0}(t,x)&=0.
	\end{split}
	\]
	Summarising, we choose lower order terms fulfilling
	\[
	\begin{split}
	a_{1,1}(t,x) + (1+a(t))a_{2,0}(t,x) - D_ta(t) &= 0,\\
	a_{0,2}(t,x) + a_{1,1}(t,x) + a_{2,0}(t,x)&=0,\\
	a_{0,1}(t,x) + a_{1,0}(t,x)&=0.
	\end{split}
	\]
Note that in this example we have multiplicity $1$ when $1\neq a(t)\neq b(t)$, multiplicity $2$ when $a(t)=1\neq b(t)$, $a(t)\neq 1=b(t)$, $a(t)=b(t)\neq 1$ and multiplicity $3$ when $a(t)=b(t)=1$.	
\end{example}
 
\section{Comparison with previous $C^\infty$ well-posedness results}
In this section we compare our result with the few results of $C^\infty$ well-posedness known for higher order hyperbolic equations with multiplicities and we provide some explanatory examples. We begin by saying that a general treatment of $m$-order hyperbolic equations with time and space dependent coefficients is still missing. Note that the recent result obtained in \cite{SpaTa-22} is valid only for equations with space dependent coefficients in space dimension 1. The focus of this paper is to allow variable multiplicities of any order $m$, to work in any space dimension and to allow $(t,x)$-dependent lower order terms. Let us start by analysing second order hyperbolic equations.

\subsection{$C^\infty$ well-posdeness of second order hyperbolic equations}
In her seminal paper \cite{Ol-70} Oleinik consider second order hyperbolic operators of the type
\begin{multline*}
Lu=u_{tt}-\sum_{i,j=1}^n (a_{ij}(t,x)u_{x_j})_{x_i}+\sum_{i=1}^n[(b_i(t,x)u_{x_i})_t+(b_i(t,x)u_t)_{x_i}]\\
+c(t,x)u_t+\sum_{i=1}^n d_i(t,x)u_{x_i}+e(t,x)u
\end{multline*}
with smooth and bounded coefficients, i.e., coefficients in $B^\infty([0,T]\times\R^n)$, the space of smooth functions with bounded derivatives of any order $k\ge 0$. She proves that the corresponding Cauchy problem is $C^\infty$ well-posed provided that the lower order terms fulfil a specific Levi condition known nowadays as Oleinik's condition: there exist $A,C>0$ such that 
\[
t\biggl[\sum_{i=1}^n d_i(t,x)\xi_i\biggr]^2\le C\biggl\{A\sum_{i,j=1}^na_{ij}(t,x)\xi_i\xi_j-\sum_{i,j=1}^n\partial_ta_{ij}(t,x)\xi_i\xi_j\biggr\},
\]
for all $t\in[0,T]$ and $x,\xi\in\R^n$. Note that Oleinik's condition is automatically fulfilled when the coefficients of the principal part are independent of $t$ and the $d_i$'s vanish identically.
Let us consider the equation
\[
D_t^2u-a^2(t)D_x^2+ia_{0,1}(t,x)D_xu + ia_{1,0}(t,x)D_tu+ a_{0,0}(t,x)u=f(t,x)
\] 
where $a\ge 0$ is of class $C^1$ with respect to $t$. This equation has roots $\lambda_{1}(t,\xi)=-a(t)\xi$ and $\lambda_{2}(t,\xi)=a(t)\xi$ which coincide when $a$ vanishes. We assume for simplicity, that all the equation coefficients are real valued. Making use of Oleinik's formulation we get equivalently the equation $Lu=-f$ where 
\[
Lu=u_{tt}-a^2(t)u_{xx}-a_{0,1}u_x-a_{1,0}u_t-a_{0,0}(t,x)u.
\]
Hence, according to Oleinik the Cauchy problem for the equation above is $C^\infty$ well-posed when 
\beq
\label{OC}
ta_{0,1}^2(t,x)\le C(A a^2(t)-2a(t)\partial_t a(t)),
\eeq
for all $x\in\R$ and $t\in[0,T]$. 

Let us now express our Levi conditions for the equation
\[
D_t^2u-a^2(t)D_x^2+ia_{0,1}(t,x)D_xu + ia_{1,0}(t,x)D_tu+ a_{0,0}(t,x)u=f(t,x).
\] 
We have to request 
\[
d_{2,1} - e_{2,1}\in C([0,T],S^{-1}(\mathbb{R}^2)).
\]
Since
\begin{align*}
     T^{-1}BT 
    &= \begin{pmatrix}
        0 & 0 \\
        b_1 - b_{(1)} +(b_2 -b_{(2)})\lambda_1\lara{\xi}^{-1} & b_2 - b_{(2)}
    \end{pmatrix}, \\
      T^{-1}D_tT& = \begin{pmatrix}
        1 & 0 \\
        -\lambda_1\lara{\xi}^{-1} & 1
    \end{pmatrix} \begin{pmatrix}
        0 & 0 \\
        D_t\lambda_1\lara{\xi}^{-1} & 0
    \end{pmatrix} = \begin{pmatrix}
        0 & 0 \\
        D_t\lambda_1\lara{\xi}^{-1} & 0
    \end{pmatrix}.
 \end{align*}
it follows that 
\begin{align*}
    & b_1 -b_{(1)} + (b_2 - b_{(2)})\lambda_1 \lara{\xi}^{-1} - D_t\lambda_1 \lara{\xi}^{-1} \in C([0,T], S^{-1}(\mathbb{R}^{2})).
 \end{align*}
 where 
 \begin{align*}
     b_1 -b_{(1)} &:= (ia_{0,1}(t,x)\xi + a_{0,0}(t,x))\langle\xi \rangle^{-1}, \\
     b_2 -b_{(2)} &:=i a_{1,0}(t,x).
 \end{align*}
This can be equivalently formulated as 
\[
(ia_{0,1}(t,x)+ia_{1,0}(t,x)a(t)-D_ta(t))\xi\lara{\xi}^{-1}\in C([0,T], S^{-1}(\mathbb{R}^{2})).
\]
Our Levi condition is fulfilled by choosing $a_{0,1}$ and $a_{1,0}$ such that 
\beq
\label{OurC}
a_{0,1}(t,x)+a_{1,0}(t,x)a(t)-\partial_ta(t)=0.
\eeq
Note that \eqref{OurC} does not imply in general \eqref{OC}. Indeed, assuming that $a_{1,0}(t,x)=0$ then we end up with 
\[
a_{0,1}(t,x)=\partial_ta(t).
\]
In order for \eqref{OC} to hold we need 
\[
\begin{split}
t(\partial_t a(t))^2&\le C(Aa^2(t)-2a(t)\partial_ta(t)),\\
t(\partial_t a(t))^2&\le Ca(t)(Aa(t)-2\partial_t a(t)).
\end{split}
\]
This is not true for $a(t)=t$. Indeed, it would lead to 
\[
t\le Ct(At-2)=CAt^2-2Ct\quad \Leftrightarrow \quad (1+2C)t\le CAt^2 \quad \Leftrightarrow \quad  \frac{1}{t}\le \frac{CA}{1+2C},
\]
which does not hold uniformly on $(0,T]$. This example shows that our Levi conditions allow us to handle second order hyperbolic equations that do not fulfil Oleinik's condition.

\begin{example}
\label{ex-order-2}
As an explanatory example let us consider the following characteristic roots: 
\begin{align*}
\lambda_1 = t^{2\alpha} \xi, \quad \text{ and } \quad \lambda_2 = -t^{2\beta}\xi.
\end{align*} 
Hence, we consider the following operator 
\begin{equation*}
D_t^2 + (t^{2\alpha} - t^{2\beta}) D_xD_t - t^{2\alpha + 2\beta}D_x^2,
\end{equation*}
with, for the sake of simplicity, $t$-dependent lower order terms 
\begin{equation*}
a_{0,1}(t) D_x + a_{0,0}(t) + a_{1,0}(t)D_t.
\end{equation*}
The Levi condition $d_{2,1} - e_{2,1}\in C([0,T],S^{-1}(\mathbb{R}^2))$ can be written as
\[
b_1 -b_{(1)} + (b_2 - b_{(2)})\lambda_1 \lara{\xi}^{-1} - D_t\lambda_1 \lara{\xi}^{-1} \in C([0,T], S^{-1}(\mathbb{R}^{2})).
\] 
and holds if 
\begin{align*}
a_{0,1}(t) + t^{\alpha} a_{1,0}(t) &= D_t  t^{\alpha}.
\end{align*}
In the sequel we give few examples of coefficients fulfilling the Levi condition above and the corresponding second order operator. Note that no assumptions are required on $a_{0,0}(t)$ a part from the standard continuity.
\begin{itemize}
	\item Let $a_{0,1} = 0$ and $a_{1,0} = i^{-1} \alpha t^{-1}$. This leads to
	\begin{equation*}
	D_t^2 u + (t^{2\alpha} - t^{2\beta})D_tD_x u - t^{2\alpha}D_x^2 u - i^{-1} \alpha t^{-1}D_tu+a_{0,0}(t)u =0.
	\end{equation*} 
	\item Let $a_{1,0} = 0$ and $a_{0,1} = i^{-1} \alpha t^{\alpha -1}$. In this case, we get
	\begin{equation*}
	D_t^2 u + (t^{2\alpha} - t^{2\beta})D_tD_x u  - t^{2\alpha}D_x^2 u - i^{-1} \alpha t^{\alpha -1}D_x u+a_{0,0}(t)u  =0.
	\end{equation*}
	\item Let $a_{0,1} = i^{-1} \frac{\alpha}{2} t^{\alpha -1}$ and $a_{1,0} = i^{-1} \frac{\alpha}{2} t^{ -1}$. This case leads to
	\begin{equation*}
	D_t^2 u + (t^{2\alpha} - t^{2\beta})D_tD_x u  - t^{2\alpha}D_x^2 u - i^{-1} \frac{\alpha}{2} t^{\alpha -1}D_x u - i^{-1} \frac{\alpha}{2} t^{-1}D_t u+a_{0,0}(t)u =0.
	\end{equation*}
\end{itemize}

\end{example}
\begin{example}
As another example let us consider the characteristic roots: 
\begin{align*}
\lambda_1 = \cos^2(t) \xi, \quad \text{ and } \quad \lambda_2 =1+\sin^2(t)\xi,
\end{align*} 
where the double multiplicity is represented by

\includegraphics[scale=0.6]{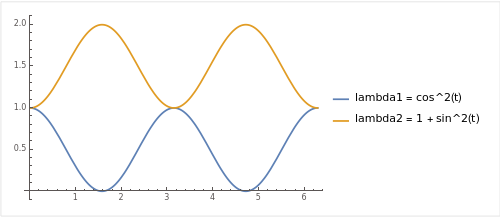}

Hence, we consider the following operator 
\begin{equation*}
D_t^2 - 2 D_xD_t + \cos^2(t)(1+\sin^2(t))D_x^2,
\end{equation*}
with lower order terms 
\begin{equation*}
a_{0,1}(t,x) D_x + a_{0,0}(t,x) + a_{1,0}(t,x)D_t
\end{equation*}
will fulfil the Levi condition 
\begin{align*}
a_{0,1}(t) + \cos(t) a_{1,0}(t) &= D_t  \cos(t)
\end{align*}
if $a_{1,0}(t) = 0$, then it leads to $a_{0,1}(t) = i\sin(t)$.
\end{example}
We conclude this subsection with a higher-dimensional example. Note that many results available in the literature hold only in space dimension 1, e.g., \cite{SpaTag-07, SpaTa-22}, so our result has he technical advantage of being formulated in any space dimension.
\begin{example}
\label{ex_R_2}
The second order operator with principal part 
\[
D_t^2-a_1(t)D_{x_1}D_t-a_2(t)D_{x_2}D_t+a_1(t)a_2(t)D_{x_1}D_{x_2}
\]
has characteristic roots $\lambda_1(t,\xi)=a_1(t)\xi_1$ and $\lambda_1(t,\xi)=a_2(t)\xi_2$ which coincides when $a_1(t)=a_2(t)=0$. Writing the lower order terms in operator form as
\[
\sum_{j=1}^2 a^{(j)}_{0,1}(t)D_{x_j}+a_{1,0}(t)D_t+a_{0,0}(t,x),
\]
we can now formulate the Levi condition 
\[
b_1 -b_{(1)} + (b_2 - b_{(2)})\lambda_1 \lara{\xi}^{-1} - D_t\lambda_1 \lara{\xi}^{-1} \in C([0,T], S^{-1}(\mathbb{R}^{4})),
\] 
as
\[
\biggl(\sum_{j=1}^2 a^{(j)}_{0,1}(t)\xi_j+a_{1,0}(t)\lambda_1(t,\xi)-D_t\lambda_1(t,\xi)\biggr)\lara{\xi}^{-1}\in C([0,T], S^{-1}(\mathbb{R}^{4})).
\]
This leads to 
\[
\biggl(\sum_{j=1}^2 a^{(j)}_{0,1}(t)\xi_j+a_{1,0}(t)a_1(t)\xi_1-D_ta_1(t)\xi_1\biggr)\lara{\xi}^{-1}\in C([0,T], S^{-1}(\mathbb{R}^{4}))
\]
which is fulfilled by choosing $a^{(2)}_{0,1}=0$ and 
\[
a^{(1)}_{0,1}(t)+a_{1,0}(t)a_1(t)=D_ta_1(t)
\]
\end{example}

\subsection{$C^\infty$ well-posdeness of third order hyperbolic equations}
Our result requires a limited level of regularity on the coefficients. Indeed, we need continuity with respect to $t$ and that the roots $\lambda_i(t,\xi)$ are of class $C^1$ with respect to $t$.
Note that in \cite{ColGrOrTa-23} the coefficients of the principal part are assumed to be of class $C^2$ and all the coefficients, also the lower order terms, need to be time-dependent only.
We can therefore provide bigger generality under this point of view. As an explanatory example, let us consider a third order operator with characteristic roots: 
	\begin{align*}
		\lambda_1 = 0, \quad \lambda_2 = a(t)\xi, \quad  \lambda_3 =b(t)\xi,
	\end{align*} 
where $a,b\in C^1([0,T])$. This is an example of variable multiplicity, since we have multiplicity $3$ when $a(t)=b(t)=0$, multiplicity $2$ when $a(t)=b(t)\neq 0$, $a(t)=0\neq b(t)$ and $a(t)\neq 0=b(t)$, and multiplicity $1$ otherwise. We can therefore consider the operator 
\[
	D_t^3-(a(t)+b(t))D_x D_t^2 + a(t)b(t) D_x^3,
	\]
	with lower order terms
	\[
	a_{0,2}(t,x)D_x^2 + a_{1,1}(t,x)D_xD_t+a_{2,0}(t,x)D_t^2 + a_{0,1}(t),xD_x + a_{1,0}(t,x)D_t+ a_{0,0}(t,x).
	\]
Note that for the sake of simplicity we work in space dimension $1$.	Our Levi conditions on the lower order terms are formulated as follows:
\[
\begin{split}
	a_{1,1}(t,x) + a(t)a_{2,0}(t,x) - D_ta(t) &= 0,\\
	a_{0,2}(t,x) + a_{1,1}(t,x) + a_{2,0}(t,x)&=0,\\
	a_{0,1}(t,x) + a_{1,0}(t,x)&=0.
	\end{split}
	\]
	
 Note that if we choose $a_{2,0}=f(t)+g(x)$ then the first Levi condition holds when 
 \[
 \begin{split}
	a_{1,1}(t,x) + a(t)g(x)&= 0,\\
	a(t)f(t)-D_ta(t)&=0.
	\end{split}
 \]
 \begin{example}
 Let us select the characteristic roots:
\begin{equation*}
	\lambda_1 = \xi, \quad \lambda_2 = \cos(t)\xi, \quad \lambda_3 = (1+\sin^2(t))\xi,
\end{equation*}
where the variable  multiplicities are represented by  

\includegraphics[scale=0.6]{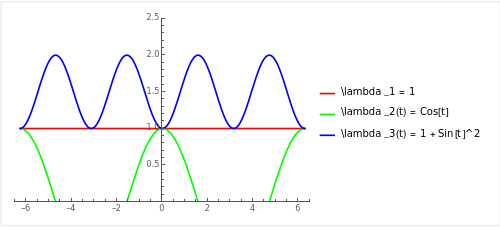}

Hence, we are considering the operator
\begin{align*}
	D_t^3 + (\cos(t)\sin^2(t)+ 2\cos(t) + 1 + \sin^2(t)  )D_x^2D_t \\
	-(\cos(t)+\sin^2(t)+2)D_x D_t^2 + \cos(t)(1+\sin^2(t)) D_x^3,
\end{align*}
with lower order terms
\[
a_{0,2}(t,x)D_x^2 + a_{1,1}(t,x)D_xD_t+a_{2,0}(t,x)D_t^2 + a_{0,1}(t,x)D_x + a_{1,0}(t,x)D_t+ a_{0,0}(t,x).
\]
Then Levi conditions on the lower order terms: 
\begin{align*}
a_{1,1}(t,x) + (1+\cos(t))a_{2,0}(t,x) &= i\sin(t), \\
a_{0,2}(t,x) + a_{1,1}(t,x) + a_{2,0}(t,x)&=0,\\
a_{0,1}(t,x) + a_{1,0}(t,x)&=0.
\end{align*}
Note that by choosing $a_{1,1}=0$ and an interval $[0,T]$ where $ (1+\cos(t))\neq 0$ then both $a_{2,0}$ and $a_{0,2}$ are time-dependent and can be expressed as 
\[
a_{2,0}=i\frac{\sin(t)}{1+\cos(t)}
\]
with $a_{0,2}=-a_{2,0}$.
\end{example}

\subsection{$C^\infty$ well-posedness of fourth order hyperbolic equations}
We conclude this section with an explanatory fourth order example.
\begin{example}
\label{ex_4_4}
	Let us work in $\mathbb{R}$ and set 
	\begin{equation*}
		\lambda_1 = \xi, \,\, \lambda_2 = -\xi, \,\,\, \lambda_3 = \sqrt{a(t)}\xi, \,\,\, \lambda_4 =-\sqrt{a(t)}\xi,
 	\end{equation*}
 	where $a(t)\ge 0$. The corresponding characteristic polynomial is 
 	\begin{equation*}
 		(\tau^2 - \xi^2) (\tau^2 - a(t)\xi^2) = \tau^4 - (a(t) + 1) \xi^2 \tau^2 + a(t) \xi^4.
  	\end{equation*}
  	Hence, we consider the operator 
  	\begin{equation*}
  		D_t^4 - (a(t) +1)D_x^2 D_t^2 + a(t)D_x^4
  	\end{equation*}
  	with lower order terms
  	\begin{align*}
  		b_1 - b_{(1)} &= (a_{0,3}(t,x)\xi^3 + a_{0,2}(t,x)\xi^2 + a_{0,1}(t,x)\xi + a_{0,0} (t,x))\lara{\xi}^{-3}, \\
  		b_2 - b_{(2)} &= (a_{1,3}(t,x)\xi^3 + a_{1,2}(t,x)\xi^2 + a_{1,1}(t,x)\xi + a_{1,0} (t,x))\lara{\xi}^{-2},\\
  		b_3 - b_{(3)} &= (a_{2,1}(t,x)\xi + a_{2,0}(t,x)  )\lara{\xi}^{-1},\\
  		b_4 - b_{(4)} &= a_{3,1}(t,x) \xi + a_{3,0}(t,x).
  	\end{align*}
  	The first Levi condition 
  	\begin{align*}
  		d_{43} -e_{43} &= b_3-b_{(3)} + (b_4 - b_{(4)})\lambda_3\lara{\xi}^{-1} - D_t\lambda_3\lara{\xi}^{-1}\\
  		&= (a_{2,1}\xi + a_{2,0}  +  (a_{3,1}\xi + a_{3,0})\sqrt{a(t)} \xi - D_t\sqrt{a(t)}\xi) \lara{\xi}^{-1},
  	\end{align*}
  	is fulfilled if 
  	\begin{align*}
	& a_{2,1}(t,x) + \sqrt{a(t)} a_{3,0}(t,x) =D_t\sqrt{a(t)},\\  	
	& a_{3,1}(t,x) = 0.
  	\end{align*} 
  The second Levi condition 
  \begin{align*}
  	d_{42} - e_{42} & = b_2-b_{(2)}  + (b_4 - b_{(4)}) (\lambda_1^2 + \lambda_2^2 + \lambda_1\lambda_2)\lara{\xi}^{-2} \\
  	& = (a_{1,3}\xi^3 + a_{1,2}\xi^2 + a_{1,1}\xi + a_{1,0} + (a_{3,1} \xi + a_{3,0}) \xi^2) \lara{\xi}^{-2}
  \end{align*}
  	is fulfilled if 
  	\begin{align*}
  		& a_{1,3} (t,x) + a_{3,1}(t,x)  = 0,\\
  		& a_{1,2}(t,x) + a_{3,0} (t,x) = 0,\\
  		& a_{1,1} (t,x) = 0.
  	\end{align*}
  	The third Levi condition 
  	\begin{align*}
  		d_{41} - e_{41} &= b_1-b_{(1)} + (b_2 - b_{(2)})\lambda_1\lara{\xi}^{-1} + (b_3 - b_{(3)})\lambda_1^2\lara{\xi}^{-2} + (b_4-b_{(4)})\lambda_1^3\lara{\xi}^{-3} \\
  		&= (a_{0,3}\xi^3 + a_{0,2}\xi^2 + a_{0,1}\xi + a_{0,0} + a_{1,3}\xi^4 + a_{1,2}\xi^3 + a_{1,1}(t,x)\xi^2 + a_{1,0}\xi) \lara{\xi}^{-3}\\ 
  		& + (a_{2,1}\xi^3 + a_{2,0} \xi^2  + a_{3,1} \xi^4 + a_{3,0}\xi^3) \lara{\xi}^{-3}
  	\end{align*}
  	should be fulfilled if 
  	\begin{align*}
  		& a_{0,3} (t,x)+ a_{1,2} (t,x)+ a_{2,1} (t,x)+ a_{3,0}(t,x) = 0,\\
  		& a_{0,2} (t,x)+ a_{1,1}(t,x) + a_{2,0} (t,x)= 0, \\
  		& a_{0,1} (t,x)+ a_{1,0} (t,x)= 0, \\
  		& a_{3,1} (t,x)+ a_{1,3}(t,x)= 0.
  	\end{align*}
  	In conclusion, we have identified the following Levi conditions on the lower order terms:
  	\begin{align*}
  	&	a_{2,1}(t,x) + \sqrt{a(t)}a_{3,0}(t,x) = D_t\sqrt{a(t)} , \\
  	&   a_{1,2}(t,x) + a_{3,0}(t,x) = 0, \\
  	&  	a_{2,1}(t,x) + a_{0,3}(t,x) = 0, \\
  	&	a_{0,2}(t,x) + a_{2,0}(t,x) = 0, \\
  	&	a_{0,1}(t,x) + a_{1,0}(t,x) = 0,\\
  	& 	a_{1,3} = a_{3,1} = a_{1,1} = 0.
  	\end{align*}
\end{example}

%
%

\end{document}